\pdfoutput=1
\documentclass[10pt]{amsart}
\usepackage{amsmath,amssymb,latexsym,soul}
\usepackage{cite,mathrsfs,amsfonts}
\usepackage{color,enumitem,graphicx}
\usepackage[colorlinks=true,urlcolor=blue,
citecolor=red,linkcolor=blue,linktocpage,pdfpagelabels,
bookmarksnumbered,bookmarksopen]{hyperref}
\usepackage[english]{babel}

\usepackage[pdftex,a4paper,left=2.6cm,right=2.6cm,top=1.35cm,bottom=3.95cm]{geometry}

\usepackage[hyperpageref]{backref}

\usepackage{mathrsfs}

\usepackage{lmodern}

\numberwithin{equation}{section}

\newtheorem{theorem}{Theorem}[section]
\newtheorem{lemma}[theorem]{Lemma}

\newtheorem{proposition}[theorem]{Proposition}

\theoremstyle{definition}

\newcommand{\N}{{\mathbb N}}

\newcommand{\Z}{{\mathbb Z}}
\newcommand{\R}{{\mathbb R}}
\renewcommand{\S}{{\mathbb S}}
\newcommand{\eps}{\varepsilon}

\newcommand{\Hsnorm}[1]{\left\|#1\right\|}

\newcommand{\dHRN}{{\dot H^s(\R^N)}}
\newcommand{\onetwe}{1/20}
\newcommand{\M}{{\mathscr M}}
\newcommand{\NN}{{\mathscr N}}
\newcommand{\Y}{{\mathscr Y}_0}
\newcommand{\ZZ}{{\mathscr Z}_0}
\newcommand{\inp}[2]{\left\langle #1, #2\right\rangle}
\newcommand{\defi}{=}
\newcommand{\dist}{\mathrm{dist}}

\newcommand{\U}{U_{\varepsilon,z}}
\newcommand{\Uoz}{U_{1,0}}
\renewcommand{\u}{u_{\delta,\varepsilon,z}}
\newcommand{\uu}{u_{\bar{\varepsilon}^2,\bar{\varepsilon},z}}
\newcommand{\Ray}{\mathscr{R}}
\newcommand{\C}{\complement}


\title{Coron problem for fractional equations}

\author[S.\ Secchi]{Simone Secchi} \thanks{S.\ Secchi is supported by
  2012 FIRB \emph{Dispersive equations and Fourier analysis}.
  N.\ Shioji is partially supported by
  the Grant-in-Aid for Scientific Research (C) (No.\ 21540214 and No.\ 26400160)
  from Japan Society for the Promotion of Science.
  M.\ Squassina is supported by 2009 PRIN \emph{Variational and
    Topological Methods in the Study of Nonlinear Phenomena}}
\address{Dipartimento di Matematica e Applicazioni
\newline\indent 
Universit\`a di Milano Bicocca
\newline\indent
Edificio U5, Via Roberto Cozzi 55, 20125 Milano, Italy}
\email{simone.secchi@unimib.it}

\author[N.\ Shioji]{Naoki Shioji}
\address{Department of Mathematics
\newline\indent
Faculty of Engineering
\newline\indent
Yokohama National University
\newline\indent
Tokiwadai, Hodogaya-ku,
Yokohama 240-8501, Japan}
\email{shioji@ynu.ac.jp}

\author[M.\ Squassina]{Marco Squassina}
\address{Dipartimento di Informatica
\newline\indent
Universit\`a degli Studi di Verona
\newline\indent
C\'a Vignal 2, Strada Le Grazie 15, 37134 Verona, Italy}
\email{marco.squassina@univr.it}

\subjclass[2010]{34K37, 58K05}
\keywords{Fractional equation, critical embedding, Coron problem}

\begin{document}

\begin{abstract}
 We prove that the critical problem for the fractional Laplacian in an annular type domain
 admits a positive solution provided that the inner hole is sufficiently small.
\end{abstract}

\maketitle

\section{Introduction}

\noindent
Let  $N\geq 3$ and $\Omega$ be a smooth bounded domain of $\R^N$. The classical formulation of Coron problem goes back to 1984 and
says that if there is a point
$x_0\in \R^N$ and radii $R_2>R_1>0$ such that
\begin{equation}
\label{domain-ass}
\{R_1\leq |x-x_0|\leq R_2\}\subset \Omega,\qquad
\{|x-x_0|\leq R_1\}\not\subset \overline\Omega,
\end{equation}
then the critical elliptic problem 
\begin{equation}
\begin{cases}
\label{probcrit-s1}
-\Delta u=u^{\frac{N+2}{N-2}} & \text{in $\Omega$,} \\
\,\, u>0 & \text{in $\Omega$},\\
\,\, u=0 & \text{on $\partial\Omega$,}
\end{cases}
\end{equation}
admits a solution provided that $R_2/R_1$ is 
sufficiently large \cite{coron}. A few years later Bahri and Coron \cite{bahricoron}, in a seminal paper, considerably improved this 
existence result by showing, via sofisticated
topological arguments based upon homology theory, that 
\eqref{probcrit-s1} admits a solution provided that
$H_m(\Omega,\Z_2)\not=\{0\}$ for some $m>0$. 
Furthermore, in \cite{dancer,ding,passaseo} the authors show that existence
of a solution is possible also in some contractible domains.
Let $N\geq 2$ and $s\in (0,1)$ with $N>2s$,
and consider the nonlocal fractional problem 
\begin{equation}
\begin{cases}
\label{probcrit}
(-\Delta)^s u=u^{\frac{N+2s}{N-2s}} & \text{in $\Omega$,} \\
\,\, u>0 & \text{in $\Omega$},\\
\,\, u=0 & \text{in $\R^N\setminus\Omega$},
\end{cases}
\end{equation}
involving the fractional Laplacian $(-\Delta)^s$.
Here, for smooth functions $\varphi$, $(-\Delta)^s\varphi$ is defined by
\[
(-\Delta)^s \varphi(x)
\defi
C(N,s)\lim_{\varepsilon\rightarrow 0}\int_{\C B_\varepsilon(x)}\frac{\varphi(x)-\varphi(y)}{|x-y|^{N+2s}}\,dy
,\qquad x\in\R^N,
\]
where
\begin{equation}
\label{cos}
C(N,s)\defi\left(\int_{\R^N}\frac{1-\cos \zeta_1}{|\zeta|^{N+2s}}\,d\zeta\right)^{-1};
\end{equation}
see \cite{DiNezza}.
Fractional Sobolev spaces are introduced in the middle part of the last
century, especially in the framework of harmonic analysis. 
More recently, after the paper of Caffarelli and Silvestre \cite{caffarelli},
a large amount of papers were written on problems
which involve the fractional diffusion $(-\Delta)^s$, $0<s<1$. Due to its nonlocal character, working on bounded
domains imposes that an appropriate variational formulation of the problem is to consider functions 
on $\R^N$ with the condition $u=0$ in $\R^N\setminus\Omega$ replacing the
boundary condition $u=0$ on $\partial\Omega$. We set $X_0=\{u\in \dot H^s(\R^N): \text{$u=0$ in $\R^N\setminus\Omega$}\}$
 and we consider the formulation
\begin{equation*}
\int_{\mathbb{R}^N}  (-\Delta)^{s/2} u (-\Delta)^{s/2}\varphi\,dx
= \int_{\Omega} u^{\frac{N+2s}{N-2s}}\varphi  \,dx
\qquad \text{for all $\varphi\in X_0$}.
\end{equation*}
It has been proved recently \cite[Corollary 1.3]{ros-oton} that if $\Omega$
is a star-shaped domain, then problem  \eqref{probcrit} does not admit
solutions and that for exponents larger that $(N+2s)/(N-2s)$ the problem
does not admit any nontrivial solution thus dropping the positivity
requirement.
It is then natural to think that, as in the local case $s=1$, by assuming suitable
geometrical or topological conditions on $\Omega$ one can get the existence
of nontrivial solutions.
We note that 
Capella~\cite{capella} studies the problem for the particular case
$s=1/2$ 
by using
the Caffarelli reduction to transform the problem in a local form
and that
Servadei and Valdinoci~\cite{valdserv-tams} studies
the Brezis-Nirenberg problem with the fractional Laplacian.
\vskip2pt
\noindent
The main result of the paper is the following Coron type result in the fractional setting.
\begin{theorem}
\label{mainthm}
If \eqref{domain-ass} holds,
then \eqref{probcrit} admits a weak solution in $X_0$ for $R_2/R_1$ 
sufficiently large.
\end{theorem}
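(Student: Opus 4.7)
The plan is to adapt Coron's classical topological argument to the nonlocal setting via a min--max scheme on the Nehari manifold combined with a barycenter map. I would work with the natural energy functional
\[
J(u)=\tfrac12\int_{\R^N}|(-\Delta)^{s/2}u|^2\,dx-\tfrac{N-2s}{2N}\int_\Omega (u^+)^{\frac{2N}{N-2s}}\,dx,
\]
whose nontrivial positive critical points in $X_0$ solve \eqref{probcrit}. Let $S$ be the best fractional Sobolev constant and set $c^*=\tfrac{s}{N}S^{N/(2s)}$: this is the mountain-pass level on $\R^N$, attained by the Aubin--Talenti bubbles $\U$. A Struwe-type global compactness decomposition, now available for $(-\Delta)^s$, should yield the Palais--Smale condition for $J$ at every level $c\in(c^*,2c^*)$, with the first loss of compactness at $c^*$ caused by a single escaping bubble and the next barrier at $2c^*$.

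The first step is to build a test-function family parametrised by the annular shell. Fix a small $\bar\varepsilon$ and a smooth cut-off $\eta$ supported in $\{R_1\le|x-x_0|\le R_2\}$, and, following the paper's notation, consider the truncated and Nehari-rescaled bubbles $\uu$ concentrated at $z=x_0+r_0\theta$ for $\theta\in\S^{N-1}$ and a fixed $r_0\in(R_1,R_2)$. The crucial quantitative estimate I would establish is that, uniformly in $\theta$,
\[
c^*<J(\uu)<2c^*-\delta
\]
for some $\delta>0$, provided $R_2/R_1$ is large enough. The truncation cost scales like $(\bar\varepsilon/\dist(z,\partial\Omega))^{N-2s}$, so taking $\bar\varepsilon\ll r_0\sim\sqrt{R_1R_2}$ opens the required window.

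The topological step uses the barycenter
\[
\beta(u)\defi\frac{\int_{\R^N}x\,|u(x)|^{2N/(N-2s)}\,dx}{\int_{\R^N}|u(x)|^{2N/(N-2s)}\,dx}.
\]
Define $\Phi\colon\S^{N-1}\to\{J\le 2c^*-\delta\}\cap\NN$ by $\Phi(\theta)=\uu$ with $z=x_0+r_0\theta$. A quantitative profile decomposition shows that any element of $\{J\le 2c^*-\delta\}\cap\NN$ is close to a single bubble, so $\beta$ is continuous there and $|\beta(\Phi(\theta))-(x_0+r_0\theta)|=O(\bar\varepsilon)$ uniformly in $\theta$. Composing $\beta\circ\Phi$ with radial projection onto the unit sphere around $x_0$ then gives a self-map of $\S^{N-1}$ homotopic to the identity. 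If $J$ admitted no critical value in $(c^*,2c^*-\delta)$, the Palais--Smale condition and a standard deformation lemma would allow one to push $\Phi(\S^{N-1})$ into the sublevel $\{J<c^*\}\cap\NN$; but on that sublevel functions are too small to carry a bubble and the set is contractible, so $\beta\circ\Phi$ would be null-homotopic --- a contradiction, producing the desired critical point.

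The main obstacle, and the point where the nonlocal case departs from the local blueprint, is the sharp asymptotic expansion of $J(\uu)$ in the fractional seminorm: truncating a power-decaying bubble inside a Gagliardo-type double integral generates long-range interaction terms absent when $s=1$, and one needs precise control both of the self-energy of $(1-\eta)\U$ and of the mixed contribution to certify the upper bound $J(\uu)<2c^*$. A second delicate point is the uniform barycenter localisation on the annular sphere, which requires a quantitative version of the fractional Struwe decomposition. Positivity of the solution finally follows by running the whole scheme with $u^+$ in place of $u$ and invoking the strong maximum principle for $(-\Delta)^s$.
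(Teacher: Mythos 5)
Your overall strategy --- Nehari/Rayleigh quotient reformulation, truncated Talenti bubbles with quantitative energy estimates, a fractional Struwe-type compactness theorem yielding the Palais--Smale property strictly between the first and second levels, a barycenter map, and finally a topological obstruction --- is exactly the route the paper takes. The functional $J$ on the Nehari manifold is equivalent to the paper's Rayleigh quotient $\Ray_0$ restricted to $\M$, and your energy window is the same window $(\S,2^{2s/N}\S)$.

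However, the closing topological argument you sketch is wrong, and it is precisely where the real work lies. You propose mapping only the \emph{sphere} $\S^{N-1}$ into the Nehari manifold, and claim that if no critical value exists the image can be deformed into $\{J<c^*\}\cap\NN$, which you describe as contractible because ``functions are too small to carry a bubble.'' This fails on two counts. First, $\inf_{\NN}J=c^*$ and the infimum is not attained on $X_0$, so $\{J<c^*\}\cap\NN=\emptyset$; the flow can only push you to $\{J\le c^*+\epsilon\}\cap\NN$. Second, the set $\{J\le c^*+\epsilon\}\cap\NN$ is \emph{not} contractible and its elements are not ``too small to carry a bubble''---by the very profile decomposition you invoke, they consist precisely of near-bubbles concentrating at points of $\overline\Omega$, so the barycenter map carries this sublevel essentially onto $\overline\Omega$, which has a hole. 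Deforming your $\Phi(\S^{N-1})$ into that sublevel preserves the fact that $\beta\circ\Phi$ encircles the hole, so there is no contradiction: a degree-one map $\S^{N-1}\to\R^N\setminus\{0\}$ is perfectly consistent.

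The correct argument, as in the paper and in Coron's original, requires parametrising by a \emph{filled ball} $B_1$ rather than the sphere alone. One sets $f(z)=u_{\bar\varepsilon^2,h(|z|),z}$ for $z\in B_1$, so that on $\partial B_1$ the bubble scale is so small that $\Ray_0(f(z))$ drops below the barrier level $\bar c\defi\inf\{\Ray_0(u):u\in\M,\ \beta(u)=0\}$, while throughout $B_1$ one still has $\Ray_0(f)\le\varpi\, 2^{2s/N}\S$ and on $\partial B_1$ the barycenter condition $|\beta(f(z))-z|\le 1/2$ ensures $\deg(\beta\circ f,\mathrm{Int}\,B_1,0)=1$. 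The key inequality $\bar c>\S$ (proved via the compactness theorem: any minimising sequence with vanishing barycenter would have to concentrate in $\overline\Omega$, contradicting $0\notin\overline\Omega$) gives the strict lower bound for the min-max level $c=\inf_{g\in G}\max_{B_1}\Ray_0\circ g$, where $G$ consists of fillings agreeing with $f$ on $\partial B_1$ and preserving the degree. Since $\S<\bar c\le c<2^{2s/N}\S$, if $c$ were not a critical value the gradient flow (cut off above the boundary level, so it preserves $G$) would lower the max below $c$, a contradiction. The degree/filling mechanism is indispensable: the sphere alone carries no obstruction.
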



\noindent
We roughly recall Coron's argument \cite{coron}
for the case $s=1$.
Although the corresponding
Rayleigh quotient does not attain the infimum value, say
$\S$, 
the global compactness theorem due to Struwe~\cite{struwe}
implies that
it satisfies the Palais-Smale condition at each level in
$(\S,2^{2/N}\S)$.
He introduced 
a test function defined on a small ball which contains
the small hole of $\Omega$,
and he showed that
under assumption \eqref{domain-ass},
the maximum value of the test function is less than $2^{2/N}\S$.
If there is no critical point
of the Rayleigh quotient in $(\S,2^{2/N}\S)$,
he showed that
the small ball can be retracted into its boundary,
which is a contradiction.
For the case $s\in (0,1)$,
one of the main difficulties that one has to face is to get 
a
uniform estimate for the
energy of truncations of the family of functions
\begin{equation}
\label{Talent}
U_{\eps,z}(x)\defi\Big(\frac{\varepsilon}{\varepsilon^2+|x-z|^2}\Big)^\frac{N-2s}{2},
\qquad z\in\R^N,\,\, \eps>0,
\end{equation}
which are precisely obtained in Propositions~\ref{one}-\ref{two}.
We note that $\U$ satisfies
$(-\Delta )^s u=u^{(N+2s)/(N-2s)}$
in $\R^N$ up to a constant,
and $\U$ with the constant factor
is called Talenti function for the fractional Laplacian.
The other difficultly for the case $s\in (0,1)$ is
global compactness.
We give a compactness result which is sufficient for our arguments.

\section{Preliminary results}
\noindent
We define
$$\dHRN\defi\{u \in L^\frac{2N}{N-2s}(\R^N): \|u\|<\infty\},$$
where
$$\|u\|\defi\left(
\iint_{\R^{2N}}\frac{|u(x)-u(y)|^2}{|x-y|^{N+2s}}\,dxdy\right)^\frac{1}{2}.$$
We also define
$$\inp{u}{v}\defi\iint_{\R^{2N}}
\frac{(u(x)-u(y))(v(x)-v(y))}{|x-y|^{N+2s}}\,dxdy
\quad\text{for each $u,v \in \dHRN$.}$$
Then we know that
$\dHRN$ is a Hilbert space with the inner product above,
it is continuously embedded into
$L^{2N/(N-2s)}(\R^N)$
and it holds
\begin{equation}
\label{careful}
\inp{u}{v}=\frac{2}{C(N,s)}
\int_{\mathbb{R}^N}  (-\Delta)^{s/2} u (-\Delta)^{s/2}v\,dx
\quad\text{for each $u,v \in \dHRN$,}
\end{equation}
where $C(N,s)$ is the constant given in \eqref{cos};
see \cite{DiNezza}.
We set
$$X_0\defi\{u\in \dot H^s(\R^N): \text{$u=0$ in
$\R^N\setminus\Omega$}\}.$$
Since it 
is a closed subspace of $\dHRN$,
$X_0$ itself is also a Hilbert space,
and we use the same symbols $\inp{\cdot}{\cdot}$ and $\|\cdot\|$
for its inner product and norm.
We note
\[
\|u\|=\left(\iint_{Q}\frac{|u(x)-u(y)|^2}{|x-y|^{N+2s}}dxdy\right)^{1/2}
\quad\text{for each $u \in X_0$,}
\]
where $Q\defi\R^{2N}\setminus (\complement \Omega\times \complement
\Omega)$.

\noindent
Since we have \eqref{careful},
for the sake of simplicity,
we will find a positive weak solution of
\begin{equation}
\label{problem}
\begin{cases}
(-\Delta)^s u=\frac{C(N,s)}{2}|u|^{\frac{4s}{N-2s}}u & \text{in $\Omega$,} \\
\,\, u=0 & \text{in $\R^N\setminus\Omega$,}
\end{cases}
\end{equation}
which is equivalent to find a weak solution of \eqref{probcrit}.
Here,
we say 
$u\in X_0$ is a weak solution to \eqref{problem}
if it satisfies
\begin{equation*}
\iint_Q\frac{(u(x)-u(y))(\varphi(x)-\varphi(y))}{|x-y|^{N+2s}}\,dxdy
=\int_\Omega |u|^{\frac{4s}{N-2s}}u\varphi\,dx
\qquad \text{for each $\varphi \in X_0$.}
\end{equation*}


\vskip1pt
\noindent
Without loss of generality,
we may assume \eqref{domain-ass} with $x_0=0 \not\in
\overline{\Omega}$,
$R_2$ is fixed with $R_2>10$ and $R_1=\delta
\in (0,\onetwe]$ which will be fixed later.
We set $B_r\defi\{x\in \R^N: |x|\leq r\}$ for $r>0$.
Without loss of generality,
we may also assume
$\Omega\cap B_\delta=\emptyset$ and
$B_5\setminus B_{3\delta/2}\subset \Omega$.
Let $\varphi_\delta : \R^N\rightarrow [0,1]$ be  
a smooth radially symmetric function
such that
\[
    \varphi_\delta(x)=
\begin{cases}
0 & \text{if $0 \leq |x|\leq 2\delta$ and $|x|\geq 4$,}\\
1 & \text{if $4\delta\leq |x|\leq 3$,}
\end{cases}
\]
\[
|\nabla \varphi_\delta(x)|\leq \delta^{-1}, 
\quad\text{for $x\in B_{4\delta}$,}
\qquad
|\nabla \varphi_\delta(x)|\leq 2, \quad\text{for $x\in \C B_3$.}
\]
For $\delta$, $\varepsilon\in(0,\onetwe]$ and $z\in B_1$, 
we set
\[
\u(x)\defi\varphi_\delta(x)\U(x),
\]
where the $\U$ were defined in \eqref{Talent}.
The next estimates will be crucial for the proof of Theorem~\ref{mainthm}.
\begin{proposition}
\label{one}
There exists $C_1>0$
such that
\begin{equation} \label{stima1}
\Hsnorm{\u}^2
\leq
\Hsnorm{\U}^2
+
C_1\left(
 \left(\frac{\delta}{\varepsilon}\right)^{N-2s}
+
\left(\frac{\delta}{\varepsilon}\right)^{N+2-2s}+\varepsilon^{N-2s}\right)
\end{equation}
for each 
$\delta$, $\varepsilon\in (0,\onetwe]$
and $z \in B_1$,
and
\begin{equation} \label{stima2}
\Hsnorm{\u}^2
\leq
\Hsnorm{\U}^2
+C_1\varepsilon^{N-2s}(1+\delta^{-2s})
\end{equation}
for each 
$\delta$, $\varepsilon\in (0,\onetwe]$
and $z \in B_1\setminus B_{1/2}$.
\end{proposition}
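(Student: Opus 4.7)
\medskip\noindent
\textbf{Proof plan.} Set $\psi_\delta\defi 1-\varphi_\delta$, so that $\u=\U-\psi_\delta\U$ and, writing $U=\U$ for short,
\[
\|\u\|^2 = \|U\|^2 - 2\inp{U}{\psi_\delta U} + \|\psi_\delta U\|^2.
\]
The Talenti function $U$ satisfies $(-\Delta)^s U = \kappa\, U^{(N+2s)/(N-2s)}$ in $\R^N$ for a universal constant $\kappa=\kappa(N,s)>0$, so identity \eqref{careful} together with integration by parts gives $\inp{U}{\psi_\delta U}= (2\kappa/C(N,s))\int_{\R^N}\psi_\delta\, U^{2N/(N-2s)}\,dx \geq 0$ because $\psi_\delta \geq 0$. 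Hence $\|\u\|^2 \leq \|U\|^2+\|\psi_\delta U\|^2$, and the problem reduces to estimating $\|\psi_\delta U\|^2$.

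\medskip\noindent
Split $\psi_\delta=\psi^0+\psi^\infty$ with $\psi^0=\psi_\delta\mathbf{1}_{B_{4\delta}}$ and $\psi^\infty=\psi_\delta\mathbf{1}_{\C B_3}$; the supports are disjoint, and a pointwise expansion shows the integrand of $\inp{\psi^0 U}{\psi^\infty U}$ equals $-\psi^0(x)\psi^\infty(y)U(x)U(y)-\psi^0(y)\psi^\infty(x)U(x)U(y)\leq 0$, whence $\|\psi_\delta U\|^2\leq \|\psi^0 U\|^2+\|\psi^\infty U\|^2$. For the far-field piece, $|x-z|\geq 2$ uniformly for $z\in B_1$ and $x\in \C B_3$ gives $U(x)\leq C\eps^{(N-2s)/2}|x|^{-(N-2s)}$ and $|\nabla U(x)|\leq C\eps^{(N-2s)/2}|x|^{-(N-2s+1)}$. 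Combined with $|\nabla\psi^\infty|\leq 2$, these yield $\|\psi^\infty U\|_{L^2}^2+\|\psi^\infty U\|_{\dot H^1}^2\leq C\eps^{N-2s}$, so that by Sobolev interpolation $\|\psi^\infty U\|^2\leq C\eps^{N-2s}$. This is the $\eps^{N-2s}$ contribution in both inequalities.

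\medskip\noindent
For the near-origin piece I split
\[
\|\psi^0 U\|^2 = \iint_{B_{4\delta}^2}\!\frac{|\psi^0 U(x)-\psi^0 U(y)|^2}{|x-y|^{N+2s}}\,dxdy + 2\!\iint_{B_{4\delta}\times\C B_{4\delta}}\!\frac{\psi^0(x)^2 U(x)^2}{|x-y|^{N+2s}}\,dxdy.
\]
In the off-diagonal integral, the inner $y$-integral is $\leq C(4\delta-|x|)^{-2s}$, and the estimate $\psi^0(x)\leq C\min(1,(4\delta-|x|)/\delta)$ (from $|\nabla\psi^0|\leq C/\delta$ and the vanishing of $\psi^0$ on $\partial B_{4\delta}$) gives, upon separating $\{|x|\leq 3\delta\}$ from the boundary layer, a bound $C\delta^{-2s}\int_{B_{4\delta}}U^2\,dx$. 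In the diagonal integral I use
\[
|\psi^0 U(x)-\psi^0 U(y)|^2 \leq 2\psi^0(y)^2(U(x)-U(y))^2 + 2U(x)^2(\psi^0(x)-\psi^0(y))^2.
\]
The first summand, rescaled through $x=z+\eps\xi$, becomes a scale-invariant Gagliardo integral of $\Uoz$ over a ball of radius $4\delta/\eps$: this is $\leq C(\delta/\eps)^{N+2-2s}$ when $\delta\leq\eps$ (via the global Lipschitz bound on $\Uoz$) and $\leq\|\Uoz\|^2\leq C(\delta/\eps)^{N+2-2s}$ in the opposite regime. The second summand, via the classical Gagliardo estimate on $\psi^0$ exploiting $|\nabla\psi^0|\leq C/\delta$, again gives $C\delta^{-2s}\int_{B_{4\delta}}U^2\,dx$. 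It remains to control $\int_{B_{4\delta}}U^2\,dx$: for arbitrary $z\in B_1$ the pointwise bound $U\leq \eps^{-(N-2s)/2}$ yields $\int_{B_{4\delta}}U^2\leq C\delta^N\eps^{-(N-2s)}$, hence the combined $\delta^{-2s}$ contribution is $\leq C(\delta/\eps)^{N-2s}$; for $z\in B_1\setminus B_{1/2}$ the improved pointwise bound $U\leq C\eps^{(N-2s)/2}$ on $B_{4\delta}$ (valid because $|x-z|\geq 3/10$) gives $\int_{B_{4\delta}}U^2\leq C\delta^N\eps^{N-2s}$, hence $C\delta^{-2s}\int U^2\leq C\delta^{N-2s}\eps^{N-2s}\leq C\eps^{N-2s}\delta^{-2s}$ since $\delta\leq 1$. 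Collecting all contributions yields \eqref{stima1} and \eqref{stima2}.

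\medskip\noindent
The main obstacle is the bookkeeping in the near-origin estimate: the behavior of $U$ on $B_{4\delta}$ changes regime as $\delta$ crosses $\eps$ and as $z$ moves away from the origin, so one must verify that each of the unified bounds $(\delta/\eps)^{N-2s}$, $(\delta/\eps)^{N+2-2s}$ and $\eps^{N-2s}\delta^{-2s}$ absorbs the several sub-estimates produced by the decomposition.
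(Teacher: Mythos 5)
Your proposal takes a genuinely different route from the paper. The paper decomposes $\R^{2N}$ into finitely many pieces (the two ``trivial'' pieces $(B_3\setminus B_{4\delta})^2$ and $(\C B_4)^2$, plus the four ``interface'' pieces $D,E,\widetilde D,\widetilde E$ living over bounded sets) and estimates each directly via the pointwise inequality \eqref{calc} and the mean value theorem. You instead write $\u = U - \psi_\delta U$, expand the norm bilinearly, and use the equation $(-\Delta)^s\U = \kappa\U^{(N+2s)/(N-2s)}$ together with $\psi_\delta\ge 0$ to discard the cross term; this is an elegant idea that reduces everything to bounding $\|\psi_\delta\U\|^2$.

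There is, however, a genuine gap in the far-field estimate. You claim $\|\psi^\infty\U\|_{L^2(\R^N)}^2 \le C\eps^{N-2s}$ and then invoke Sobolev interpolation between $L^2$ and $\dot H^1$ to conclude $\|\psi^\infty\U\|_{\dot H^s}^2\le C\eps^{N-2s}$. But $\psi^\infty\U$ is \emph{not} square-integrable in general: $\psi^\infty\equiv 1$ outside $B_4$ while $\U(x)\sim \eps^{(N-2s)/2}|x|^{-(N-2s)}$, and $|x|^{-(N-2s)}\in L^2(\{|x|>4\})$ only when $N>4s$. The hypotheses of the paper allow $N\le 4s$ (for instance $N=2$ and $s\in(1/2,1)$), so the interpolation step fails exactly in a range the proposition must cover. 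The underlying inequality $\|\psi^\infty\U\|^2\le C\eps^{N-2s}$ is in fact true, but proving it requires estimating the ``far-far'' Gagliardo integral directly (e.g.\ by splitting $|x-y|\lessgtr |x-z|$ and using the decay of $|\nabla\U|$), not interpolation. Note that the paper never faces this difficulty: because $\u$ vanishes identically on $\C B_4$, the contribution of $(\C B_4)^2$ to $\|\u\|^2-\|\U\|^2$ is manifestly $\le 0$ and is simply dropped, whereas your decomposition moves this region into $\|\psi_\delta\U\|^2$ and forces you to bound it from above. A smaller omission: for \eqref{stima2} (the case $z\in B_1\setminus B_{1/2}$) you only upgrade the $\delta^{-2s}\int_{B_{4\delta}}\U^2$ terms, but the Gagliardo integral of $\U$ over $B_{4\delta}^2$ also needs its own improved bound there; it works out (since $|\nabla\U|\le C\eps^{(N-2s)/2}$ on $B_{4\delta}$ when $|z|\ge 1/2$, giving $C\eps^{N-2s}\delta^{N+2-2s}\le C\eps^{N-2s}$), but you should say so, as the generic bound $(\delta/\eps)^{N+2-2s}$ from the other case is not of the form $C\eps^{N-2s}(1+\delta^{-2s})$.
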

\begin{proof}
Let $\delta$, $\varepsilon\in (0,\onetwe]$
and $z \in B_1$.
We define
\begin{align*}
D&=\{(x,y)\in (B_4\times\C B_3)\cup(\C B_3\times B_4) : |x-y|>1\},\\
E&=\{(x,y)\in (B_4\times\C B_3)\cup(\C B_3\times B_4): |x-y|\leq 1\},\\
\widetilde{D}&=\{(x,y)\in 
(B_{4\delta}\times (B_4\setminus B_{4\delta}))
\cup
((B_4\setminus B_{4\delta})\times B_{4\delta})
: |x-y|>\delta\}
\end{align*}
and
\[
\widetilde{E}=
(B_{4\delta}\times B_{4\delta})
\cup
\left\{(x,y)\in 
(B_{4\delta}\times (B_4\setminus B_{4\delta}))
\cup
((B_4\setminus B_{4\delta})\times B_{4\delta})
: |x-y|\leq \delta \right\}.
\]
Then we have
\[
\R^{2N}=\widetilde{E}\cup \widetilde{D}\cup E\cup D
\cup ((B_3\setminus B_{4\delta})\times (B_3\setminus B_{4\delta}))
\cup (\C B_4 \times \C B_4).
\]
We remark that this is \emph{not} a disjoint union.
We can easily see that
\[
\int\limits_{(B_3\setminus B_{4\delta})\times (B_3\setminus B_{4\delta})}
\left(\frac{|\u(x)-\u(y)|^2}{|x-y|^{N+2s}}
-\frac{|\U(x)-\U(y)|^2}{|x-y|^{N+2s}}\right)\,dxdy=0
\]
and
\[
\int\limits_{\C B_4 \times \C B_4}\left(\frac{|\u(x)-\u(y)|^2}{|x-y|^{N+2s}}
-\frac{|\U(x)-\U(y)|^2}{|x-y|^{N+2s}}\right)\,dxdy\leq 0.
\]
We shall denote by $C$ generic positive constants, possibly varying from line to line, and which do not depend on 
$\delta, \varepsilon\in (0,\onetwe]$ and $z\in B_1$.
For each $(x,y)\in \R^{2N}$, we have
\begin{multline}
\label{calc}
\frac{|\u(x)-\u(y)|^2}{|x-y|^{N+2s}}
-\frac{|\U(x)-\U(y)|^2}{|x-y|^{N+2s}}\\
=\frac{(\u(x)+\U(x)-\u(y)-\U(y)) 
 (\u(x)-\U(x)+\U(y)-\u(y))}{|x-y|^{N+2s}} \\
\leq 
\frac{2\U(x)\U(y)}{|x-y|^{N+2s}}.
\end{multline}
From $z \in B_1$, we have
\begin{multline*}
\int\limits_{B_4\times \C B_3,\,|x-y|>1} \left(\frac{|\u(x)-\u(y)|^2}{|x-y|^{N+2s}}
-\frac{|\U(x)-\U(y)|^2}{|x-y|^{N+2s}}\right)\,dxdy\\
 \qquad\leq
\int\limits_{B_4\times \C B_3,\,|x-y|>1}
\frac{2\U(x)\U(y)}
{|x-y|^{N+2s}}\,dxdy
\leq  C\varepsilon^\frac{N-2s}{2}
\int\limits_{B_4\times \C B_3,\,|x-y|>1}
\frac{\left(\frac{\varepsilon}{\varepsilon^2+|x-z|^2}\right)^\frac{N-2s}{2}}
{|x-y|^{N+2s}}\,dxdy\\
\leq
C\varepsilon^{N-2s}
\int\limits_{|\xi|\leq 5}\frac{d\xi}{(\varepsilon^2+|\xi|^2)^\frac{N-2s}{2}} 
\int\limits_{|\eta|> 1}\frac{d\eta}{|\eta|^{N+2s}}
=
C\varepsilon^{N-2s}\* \varepsilon^{2s}
\int\limits_{|\xi|\leq 5/\varepsilon}\frac{d\xi}{(1+|\xi|^2)^\frac{N-2s}{2}} \leq 
C\varepsilon^{N-2s}.
\end{multline*}%
So we can infer
\[
\int_{D} \left(\frac{|\u(x)-\u(y)|^2}{|x-y|^{N+2s}}
-\frac{|\U(x)-\U(y)|^2}{|x-y|^{N+2s}}\right)\,dxdy\leq C\varepsilon^{N-2s}.
\]
We note
\[
\nabla\U(x)=-(N-2s)\left(\frac{\varepsilon}{\varepsilon^2+|x-z|^2}
\right)^\frac{N-2s}{2}\frac{x-z}{\varepsilon^2+|x-z|^2},
\]
and
\[ 
\frac{|x-z|}{\varepsilon^2+|x-z|^2}\leq \frac{1}{2\varepsilon}.
\]
Since $|\nabla\varphi_\delta(x)|\leq 2$ for $|x|\geq
 4\delta$,
$z\in B_1$ and $|tx+(1-t)y|\geq 2$ for each $(x,y)\in E$ and $t\in [0,1]$,
\begin{multline*}
\int_{E} \left(\frac{|\u(x)-\u(y)|^2}{|x-y|^{N+2s}}
-\frac{|\U(x)-\U(y)|^2}{|x-y|^{N+2s}}\right)\,dxdy\\
 \leq
\int_{E}
\frac{|\u(x)-\u(y)|^2}{|x-y|^{N+2s}}\,dxdy=\int_{E}
\frac{|\int_0^1(\nabla\u)(tx+(1-t)y)\cdot
 (x-y)\,dt|^2}{|x-y|^{N+2s}}\,dxdy\\
\qquad \leq\int_{E}
\frac{\int_0^1(8|\U(tx+(1-t)y)|^2
+2|(\nabla\U)(tx+(1-t)y)|^2)\,dt}{|x-y|^{N+2s-2}}\,dxdy\\
\qquad
\leq
C\varepsilon^{N-2s}
\int_{E}
\frac{dxdy}{|x-y|^{N+2s-2}}\\
\qquad
\leq
C\varepsilon^{N-2s}
\int_{|\xi|\leq 4}\,d\xi 
\int_{|\eta|\leq 1}\frac{d\eta}{|\eta|^{N+2s-2}}
=C\varepsilon^{N-2s}.
\end{multline*}
From \eqref{calc}, we also have
\begin{multline*}
\int_{\widetilde{D}} \left(\frac{|\u(x)-\u(y)|^2}{|x-y|^{N+2s}}
-\frac{|\U(x)-\U(y)|^2}{|x-y|^{N+2s}}\right)\,dxdy\\
 \leq
2\int_{\widetilde{D}}
\frac{\U(x)\U(y)}
{|x-y|^{N+2s}}\,dxdy
=\frac{2}{\varepsilon^{N-2s}}\int_{\widetilde{D}}
\frac{dxdy}
{|x-y|^{N+2s}}
\\
\qquad
\leq
\frac{C}{\varepsilon^{N-2s}}\int_{|\xi|\leq 4\delta}\,d\xi 
\int_{|\eta|> \delta}\frac{d\eta}{|\eta|^{N+2s}}
\leq C\left(\frac{\delta}{\varepsilon}\right)^{N-2s}.
\end{multline*}
Since 
$|\nabla \varphi_\delta(x)|\leq 1/\delta$ for $x\in B_{4\delta}$,
we have 
{\allowdisplaybreaks
\begin{align*}
&\int_{\widetilde{E}} \left(\frac{|\u(x)-\u(y)|^2}{|x-y|^{N+2s}}
-\frac{|\U(x)-\U(y)|^2}{|x-y|^{N+2s}}\right)\,dxdy \\
&\qquad\leq
\int_{\widetilde{E}}
\frac{|\u(x)-\u(y)|^2}{|x-y|^{N+2s}}\,dxdy\\
&\qquad =\int_{\widetilde{E}}
\frac{|\int_0^1(\nabla\u)(tx+(1-t)y)\cdot (x-y)\,dt|^2}{|x-y|^{N+2s}}\,dxdy\\
&\qquad \leq\int_{\widetilde{E}}
\frac{\int_0^1((1/\delta)^2|\U(tx+(1-t)y)|^2
+|(\nabla\U)(tx+(1-t)y)|^2)\,dt}{|x-y|^{N+2s-2}}\,dxdy\\
&\qquad
\leq C\left(\frac{1}{\delta^2\varepsilon^{N-2s}}
+\frac{1}{\varepsilon^{N-2s}}\cdot\frac{1}{\varepsilon^{2}}
\right)
\int_{\widetilde{E}}
\frac{dxdy}{|x-y|^{N+2s-2}}\\
&\qquad\leq
C\left(\frac{1}{\delta^2\varepsilon^{N-2s}}+\frac{1}{\varepsilon^{N+2-2s}}
\right)
\int_{|\xi|\leq 4\delta}\,d\xi 
\int_{|\eta|\leq \delta}\frac{d\eta}{|\eta|^{N+2s-2}}\\
&\qquad=C\left(\left(\frac{\delta}{\varepsilon}\right)^{N-2s}
+\left(\frac{\delta}{\varepsilon}\right)^{N+2-2s}
\right).
\end{align*}
}%
By the inequalities above, we obtain \eqref{stima1}.
Let $z \in B_1\setminus B_{1/2}$.
In order to obtain \eqref{stima2}
we need to consider the integrals on $\widetilde{D}$ and
 $\widetilde{E}$.
We have
\begin{multline*}
\int\limits_{(B_4\setminus B_{4\delta})\times  B_{4\delta},\,|x-y|>\delta} 
\left(\frac{|\u(x)-\u(y)|^2}{|x-y|^{N+2s}}
-\frac{|\U(x)-\U(y)|^2}{|x-y|^{N+2s}}\right)\,dxdy\\
 \leq
\int\limits_{(B_4\setminus B_{4\delta})\times  B_{4\delta},\,|x-y|>\delta} 
\frac{2\U(x)\U(y)}
{|x-y|^{N+2s}}\,dxdy
\leq  C\varepsilon^\frac{N-2s}{2}
\int\limits_{(B_4\setminus B_{4\delta})\times B_{4\delta},\,|x-y|>\delta} 
\frac{\big(\frac{\varepsilon}{\varepsilon^2+|x-z|^2}\big)^\frac{N-2s}{2}}
{|x-y|^{N+2s}}\,dxdy\\
\leq
C\varepsilon^{N-2s}
\int_{|\xi|\leq 5}\frac{d\xi}{(\varepsilon^2+|\xi|^2)^\frac{N-2s}{2}} 
\int_{|\eta|> \delta}\frac{d\eta}{|\eta|^{N+2s}}\\
\qquad
=
C\varepsilon^{N-2s}\delta^{-2s}\cdot \varepsilon^{2s}
\int_{|\xi|\leq 5/\varepsilon}\frac{d\xi}{(1+|\xi|^2)^\frac{N-2s}{2}} 
= C\varepsilon^{N-2s}\delta^{-2s}.
\end{multline*}
Hence
\[
\int_{\widetilde{D}} 
\left(\frac{|\u(x)-\u(y)|^2}{|x-y|^{N+2s}}
-\frac{|\U(x)-\U(y)|^2}{|x-y|^{N+2s}}\right)\,dxdy
\leq C\varepsilon^{N-2s}\delta^{-2s}.
\]
Since $|\nabla\varphi_\delta(x)|\leq 1/\delta$ for $x\in \R^N$,
$z\in B_1 \setminus B_{1/2}$ 
and $|tx+(1-t)y|\leq 5\delta\leq 1/4$ for each $(x,y)\in \widetilde{E}$ and $t\in [0,1]$,
we have
{\allowdisplaybreaks
\begin{align*}
&\int_{\widetilde{E}} \left(\frac{|\u(x)-\u(y)|^2}{|x-y|^{N+2s}}
-\frac{|\U(x)-\U(y)|^2}{|x-y|^{N+2s}}\right)\,dxdy\\
& \qquad\leq
\int_{\widetilde{E}}
\frac{|\u(x)-\u(y)|^2}{|x-y|^{N+2s}}\,dxdy\\
& \qquad=\int_{\widetilde{E}}
\frac{|\int_0^1(\nabla\u)(tx+(1-t)y)\cdot
 (x-y)\,dt|^2}{|x-y|^{N+2s}}\,dxdy\\
&\qquad \leq\int_{\widetilde{E}}
\frac{\int_0^1((1/\delta)^2|\U(tx+(1-t)y)|^2
+|(\nabla\U)(tx+(1-t)y)|^2)\,dt}{|x-y|^{N+2s-2}}\,dxdy\\
&\qquad
\leq
C\varepsilon^{N-2s}\delta^{-2}
\int_{\widetilde{E}}
\frac{dxdy}{|x-y|^{N+2s-2}}\\
&\qquad
\leq
C\varepsilon^{N-2s}\delta^{-2}
\int_{|\xi|\leq 4\delta}\,d\xi 
\int_{|\eta|\leq \delta}\frac{d\eta}{|\eta|^{N+2s-2}}
=C(\varepsilon\delta)^{N-2s}
\leq C\varepsilon^{N-2s}.
\end{align*}
}%
Thus,
we obtain the second desired inequality.
\end{proof}
\begin{proposition}
\label{two}
There exists $C_2>0$
such that
\begin{equation}\label{stima3}
\int_{\R^N} |\u|^\frac{2N}{N-2s}\,dx\geq
\int_{\R^N} |\U|^\frac{2N}{N-2s}\,dx-C_2
\Big(
\Big(\frac{\delta}{\varepsilon}\Big)^N
+
\varepsilon^N\Big)
\end{equation}
for each 
$\delta$, $\varepsilon\in (0,\onetwe]$
and $z \in B_1$,
and
\begin{equation} \label{stima4}
\int_{\R^N} |\u|^\frac{2N}{N-2s}\,dx\geq
\int_{\R^N} |\U|^\frac{2N}{N-2s}\,dx-C_2\varepsilon^N
\end{equation}
for each 
$\delta$, $\varepsilon\in (0,\onetwe]$
and $z \in B_1\setminus B_{1/2}$.
\end{proposition}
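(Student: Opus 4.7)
The strategy is elementary once one exploits that $\u=\varphi_\delta\U$ with $0\leq\varphi_\delta\leq 1$ and $\varphi_\delta\equiv 1$ on $B_3\setminus B_{4\delta}$. Writing $p=2N/(N-2s)$, one has $|\u|^p\leq |\U|^p$ pointwise, and
\[
\int_{\R^N}|\U|^p\,dx-\int_{\R^N}|\u|^p\,dx
=\int_{\R^N}(1-\varphi_\delta^p)|\U|^p\,dx,
\]
where the integrand is supported in $B_{4\delta}\cup \C B_3$ and bounded by $|\U|^p=(\varepsilon/(\varepsilon^2+|x-z|^2))^N$. It therefore suffices to estimate the Talenti profile over these two regions and sum the contributions.

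Over $\C B_3$: since $z\in B_1$, we have $|x-z|\geq 2$ for $|x|\geq 3$, so $(\varepsilon^2+|x-z|^2)^N\geq |x-z|^{2N}$ and the translation $y=x-z$ gives
\[
\int_{\C B_3}\Big(\tfrac{\varepsilon}{\varepsilon^2+|x-z|^2}\Big)^N\,dx
\leq \varepsilon^N\int_{|y|\geq 2}\frac{dy}{|y|^{2N}}\leq C\varepsilon^N.
\]
Over $B_{4\delta}$, under the weaker hypothesis $z\in B_1$, I would simply rescale by $\xi=(x-z)/\varepsilon$ to get
\[
\int_{B_{4\delta}}\Big(\tfrac{\varepsilon}{\varepsilon^2+|x-z|^2}\Big)^N\,dx
=\int_{(B_{4\delta}-z)/\varepsilon}\frac{d\xi}{(1+|\xi|^2)^N}
\leq |B_1|\Big(\frac{4\delta}{\varepsilon}\Big)^N
\leq C\Big(\frac{\delta}{\varepsilon}\Big)^N,
\]
since the integrand is bounded by $1$ and the translated rescaled ball has volume $|B_1|(4\delta/\varepsilon)^N$. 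Adding the two contributions yields \eqref{stima3}.

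For the sharper bound \eqref{stima4} with $z\in B_1\setminus B_{1/2}$ the key point is that $z$ is now uniformly away from the small hole, so the Talenti profile cannot concentrate inside $B_{4\delta}$: for $|x|\leq 4\delta\leq 1/5$ one has $|x-z|\geq |z|-|x|\geq 1/2-1/5=3/10$, whence
\[
\Big(\tfrac{\varepsilon}{\varepsilon^2+|x-z|^2}\Big)^N\leq C\varepsilon^N\quad\text{on }B_{4\delta},
\]
and multiplying by $|B_{4\delta}|\leq C$ already gives the bound $C\varepsilon^N$ on that region. Combined with the $\C B_3$ estimate, this proves \eqref{stima4}. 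There is no real analytic obstacle in this proposition; the only point that needs care is selecting the right pointwise bound on the Talenti profile in each small region: the crude volume estimate in the concentration case $z\in B_1$, and the ``distance from the singularity'' estimate in the separated case $z\in B_1\setminus B_{1/2}$.
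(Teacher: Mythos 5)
Your proof is correct and follows the same route as the paper's (very terse) argument: since $0\leq\varphi_\delta\leq 1$ and $\varphi_\delta\equiv 1$ on $B_3\setminus B_{4\delta}$, the difference of the two integrals reduces to the Talenti profile $|\U|^{2N/(N-2s)}=(\varepsilon/(\varepsilon^2+|x-z|^2))^N$ integrated over $B_{4\delta}$ and $\complement B_3$, and each piece is bounded by the crude $L^\infty$-times-volume estimate on $B_{4\delta}$ and by the decay estimate $|x-z|\geq 2$ on $\complement B_3$. For \eqref{stima4} you correctly supply the separation bound $|x-z|\geq 1/2-4\delta\geq 3/10$ on $B_{4\delta}$, which is exactly the ``similar calculation'' the paper leaves to the reader.
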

\begin{proof}
Let $\delta$, $\varepsilon\in (0,\onetwe]$ and $z \in B_1$.
We have
\begin{multline*}
\int_{\R^N} |\U|^\frac{2N}{N-2s}\,dx
-\int_{\R^N} |\u|^\frac{2N}{N-2s}\,dx\\
\leq
\int_{|x|\leq
 4\delta}\left(\frac{\varepsilon}{\varepsilon^2+|x-z|^2}\right)^N
\,dx
+
\int_{|x|\geq
3}\left(\frac{\varepsilon}{\varepsilon^2+|x-z|^2}\right)^N
\,dx 
\leq C\left(\frac{\delta}{\varepsilon}\right)^N+C\varepsilon^N,
\end{multline*}
which yields \eqref{stima3}.
By a similar calculation,
we can obtain \eqref{stima4} as well.
\end{proof}

\noindent
Let $I: \dot H^s(\R^N)\to\R$ be given by
$$
I(u)\defi\frac{1}{2}\Hsnorm{u}^2-\frac{N-2s}{2N}\int_{\R^N} |u|^{2N/(N-2s)}
\,dx
\quad\text{for $u \in \dHRN$,}
$$
and let
$I_0: X_0\rightarrow\R$ be its restriction to $X_0$, i.e.,
$$I_0(u)=I(u)\quad\text{for $u \in X_0$.}$$
\noindent
Next, let us define $\Ray \colon \dot H^s(\R^N) \setminus \{0\} \to \mathbb{R}$ by
\begin{equation*}
\Ray(u)\defi
\frac{\Hsnorm{u}^2}{\NN(u)},
\end{equation*}
where
\begin{equation}\label{N}
\NN(u) \defi \Big( \int_{\R^N} |u|^{\frac{2N}{N-2s}}\, dx \Big)^{\frac{N-2s}{N}}.
\end{equation}
\noindent
We also define $\NN_0$ and $\Ray_0$
by the restrictions of $\NN$ and $\Ray$
to $X_0\setminus\{0\}$, respectively.
That is,
$$\NN_0(u)=\NN(u)\quad\text{and}\quad
\Ray_0(u)=\Ray(u)\quad\text{for $u\in X_0\setminus\{0\}$.}$$

\noindent
\begin{lemma}
\label{link-prop}
$\Ray_0 \in C^1(X_0\setminus\{0\})$, 
and if $\Ray_0'(v)=0$ with $v \in X_0$,
then
$I_0'(\lambda v)=0$ with some $\lambda>0$.
\end{lemma}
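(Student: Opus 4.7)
The plan is to write $\Ray_0$ as a quotient of two $C^1$ functionals with a denominator that never vanishes on $X_0\setminus\{0\}$, and then to convert the stationarity equation $\Ray_0'(v)=0$ into $I_0'(\lambda v)=0$ by a scaling $\lambda>0$ read off from matching powers of homogeneity.

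For the regularity, the map $u\mapsto \Hsnorm{u}^2$ is $C^1$ on $X_0$ with derivative $2\inp{u}{\cdot}$. The nonlinear part $A(u)\defi\int_{\R^N}|u|^{2N/(N-2s)}\,dx$ is $C^1$ on $X_0$, with
\[
A'(u)[\varphi]=\frac{2N}{N-2s}\int_{\R^N}|u|^{\frac{4s}{N-2s}}u\,\varphi\,dx,
\]
by the standard argument using the continuous embedding $X_0\hookrightarrow L^{2N/(N-2s)}(\R^N)$ together with H\"older's inequality applied to the difference quotients. Since the embedding also forces $A(u)=0\iff u=0$, the functional $\NN_0(u)=A(u)^{(N-2s)/N}$ is $C^1$ on $X_0\setminus\{0\}$, and therefore so is $\Ray_0$.

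For the critical-point identification, I would apply the quotient rule at $v\in X_0\setminus\{0\}$ to obtain
\[
\Ray_0'(v)[\varphi]=\frac{2\inp{v}{\varphi}}{\NN_0(v)}-\frac{\Hsnorm{v}^2\,\NN_0'(v)[\varphi]}{\NN_0(v)^2},
\]
and insert the chain-rule expression $\NN_0'(v)[\varphi]=2\,A(v)^{-2s/N}\int|v|^{4s/(N-2s)}v\varphi\,dx$. After clearing the common factors of $A(v)$, the assumption $\Ray_0'(v)=0$ collapses to
\[
\inp{v}{\varphi}\,A(v)=\Hsnorm{v}^2\int_{\R^N}|v|^{\frac{4s}{N-2s}}v\,\varphi\,dx\qquad\text{for every $\varphi\in X_0$.}
\]

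Finally, I would read off $\lambda$ by comparing exponents. The equation $I_0'(\lambda v)[\varphi]=0$ reads $\lambda\inp{v}{\varphi}=\lambda^{(N+2s)/(N-2s)}\int|v|^{4s/(N-2s)}v\varphi\,dx$, so the natural choice is
\[
\lambda\defi\Big(\frac{\Hsnorm{v}^2}{A(v)}\Big)^{\!(N-2s)/(4s)},
\]
which is strictly positive because $v\neq 0$. Substituting this $\lambda$ into the identity above and using $(N+2s)/(N-2s)-1=4s/(N-2s)$ yields $I_0'(\lambda v)=0$ on every $\varphi\in X_0$. The argument is elementary and I do not foresee any genuine obstacle; the only point requiring a little care is the $C^1$ regularity of $A$ at the critical Sobolev exponent, which is standard once the embedding $X_0\hookrightarrow L^{2N/(N-2s)}(\R^N)$ is in hand.
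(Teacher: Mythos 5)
Your proposal is correct and follows essentially the same route as the paper: compute $\Ray_0'$ by the quotient rule, extract the Euler–Lagrange identity $\inp{v}{\varphi}=\bigl(\|v\|^2/\!\int_\Omega|v|^{2N/(N-2s)}\bigr)\int_\Omega|v|^{4s/(N-2s)}v\varphi$, and then choose $\lambda$ with $\lambda^{4s/(N-2s)}=\|v\|^2/\!\int_\Omega|v|^{2N/(N-2s)}$. You simply fill in the routine steps (the $C^1$ regularity of $A$ via the Sobolev embedding, and the explicit homogeneity computation that shows this $\lambda$ makes $I_0'(\lambda v)$ vanish) that the paper leaves to the reader.
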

\begin{proof}
We can easily see $\Ray_0\in C^1(X_0\setminus\{0\})$.
Let $v \in X_0$.
Since we have
$$
\Ray_0'(v)(\varphi)
= \frac{2\NN_0(v) \iint_Q \frac{(v(x)-v(y))(\varphi(x)-\varphi(y))}{|x-y|^{N+2s}}\, dxdy - 2 \|v\|^2 \NN_0(v)^{-\frac{2s}{N-2s}} \int_\Omega |v|^{\frac{4s}{N-2s}}v \varphi \, dx}{\NN_0(v)^2}
$$
for every $\varphi\in X_0$,
we have
$\Ray_0'(v) =0$ if and only if, for every $\varphi \in X_0$,
\[
\iint_Q \frac{(v(x)-v(y))(\varphi(x)-\varphi(y))}{|x-y|^{N+2s}}\, dxdy  =  \frac{\|v\|^2}{\int_\Omega |v|^{\frac{2N}{N-2s}}\, dx} \int_\Omega |v|^{\frac{4s}{N-2s}}v\varphi \, dx.
\]
Setting $\lambda$ by
\begin{equation}
\label{lambda-def}
\lambda^{\frac{4s}{N-2s}} = \frac{\|v\|^2}{\int_\Omega |v|^{\frac{2N}{N-2s}}\, dx},
\end{equation}
we have $I_0(\lambda v)=0$.
This concludes the proof.
%
\end{proof}

\noindent
We define a manifold of codimension one by setting
\begin{equation}
\M\defi \left\{ u \in X_0 : \int_\Omega |u|^{\frac{2N}{N-2s}}\,dx =1 \right\}.
\end{equation}

\begin{lemma}
\label{Ps-connect}
Let $\{v_n\}_n\subset \M$ be a Palais-Smale sequence for $\Ray_0$ 
at level $c$.
Then
\[
u_n=\lambda_n v_n,\qquad
\lambda_n\defi \Ray_0(v_n)^{(N-2s)/(4s)}
\]
is a Palais-Smale sequence for $I_0$ at level $(s/N) c^{N/(2s)}$.
\end{lemma}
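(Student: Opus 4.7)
The proof hinges on two algebraic identities between the values and derivatives of $\Ray_0$ at $v_n$ and $I_0$ at $u_n=\lambda_n v_n$. Since $v_n\in\M$ gives $\NN_0(v_n)=1$, we have $\Ray_0(v_n)=\Hsnorm{v_n}^2$; the definition of $\lambda_n$ then becomes $\lambda_n^{4s/(N-2s)}=\Hsnorm{v_n}^2$, so that $\lambda_n^{2N/(N-2s)}=\lambda_n^2\Hsnorm{v_n}^2$. Substituting $u_n=\lambda_n v_n$ into the definition of $I_0$ yields
\[
I_0(u_n)=\Bigl(\tfrac{1}{2}-\tfrac{N-2s}{2N}\Bigr)\lambda_n^2\Hsnorm{v_n}^2=\tfrac{s}{N}\,\Ray_0(v_n)^{N/(2s)}\longrightarrow\tfrac{s}{N}c^{N/(2s)},
\]
which gives the stated energy level.

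Next, I would expand
\[
I_0'(u_n)(\varphi)=\lambda_n\inp{v_n}{\varphi}-\lambda_n^{(N+2s)/(N-2s)}\int_\Omega|v_n|^{4s/(N-2s)}v_n\varphi\,dx
\]
and use $\lambda_n^{(N+2s)/(N-2s)}=\lambda_n\Hsnorm{v_n}^2$ to compare this with the formula for $\Ray_0'(v_n)(\varphi)$ recorded in the proof of Lemma~\ref{link-prop} (simplified by $\NN_0(v_n)=1$). This produces the pointwise identity $I_0'(u_n)=\tfrac{\lambda_n}{2}\,\Ray_0'(v_n)$ in $X_0^*$. Since $\lambda_n\to c^{(N-2s)/(4s)}$ is bounded, the Palais-Smale property for $I_0$ at the level above reduces to showing $\|\Ray_0'(v_n)\|_{X_0^*}\to 0$.

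The only nontrivial point is this last step. If the Palais-Smale hypothesis is read directly on $X_0\setminus\{0\}$, it \emph{is} the hypothesis. If instead it is read on the constraint $\M$, so that only the tangential norm of $\Ray_0'(v_n)$ is controlled, I would transfer it to the full dual as follows: decompose $\varphi=t_n v_n+\psi_n$ with $\psi_n\in T_{v_n}\M=\ker G'(v_n)$ for $G(u)=\int_\Omega |u|^{2N/(N-2s)}\,dx-1$, which is possible because $G'(v_n)(v_n)=\tfrac{2N}{N-2s}\neq 0$. The $0$-homogeneity of $\Ray_0$ gives $\Ray_0'(v_n)(v_n)=0$, so $\Ray_0'(v_n)(\varphi)=\Ray_0'(v_n)(\psi_n)$; a Hölder-Sobolev estimate together with $\NN_0(v_n)=1$ and the boundedness of $\Hsnorm{v_n}$ yields $|t_n|\le C\Hsnorm{\varphi}$ and hence $\Hsnorm{\psi_n}\le C'\Hsnorm{\varphi}$ uniformly in $n$, so that $\|\Ray_0'(v_n)\|_{X_0^*}\le C'\|(\Ray_0|_\M)'(v_n)\|_{T_{v_n}\M^{*}}\to 0$. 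Combined with the derivative identity this gives $\|I_0'(u_n)\|_{X_0^*}\to 0$. The main obstacle is precisely this transfer from the tangential to the full dual norm; everything else is exponent bookkeeping in the homogeneous rescaling $u_n=\lambda_n v_n$.
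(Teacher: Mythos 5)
Your proof is correct and follows essentially the same path as the paper's: rescale to $u_n=\lambda_n v_n$, observe $I_0'(u_n)=\tfrac{\lambda_n}{2}\Ray_0'(v_n)$, and use boundedness of $\lambda_n$ to conclude. Your exact identity $I_0(u_n)=\tfrac{s}{N}\Ray_0(v_n)^{N/(2s)}$ is a slightly cleaner way to obtain the energy level than the paper's route through $I_0'(u_n)(u_n)=o(1)$, and your tangential-versus-full-dual remark is careful but unnecessary here, since the lemma's hypothesis is a Palais--Smale sequence for $\Ray_0$ on $X_0\setminus\{0\}$ (the conversion from $\ZZ(v_n)\to 0$ is done separately in Proposition~\ref{prop:PS} via \eqref{flow2}).
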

\begin{proof}
By following the computations of Lemma~\ref{link-prop}, if $\lambda_n$
is defined as in \eqref{lambda-def}, we have
\[
\frac{1}{2}\Ray_0'(v_n)(\varphi)
=\iint_Q \frac{(v_n(x)-v_n(y))(\varphi(x)-\varphi(y))}{|x-y|^{N+2s}}dxdy  
- \lambda_n^{\frac{4s}{N-2s}}\int_\Omega |v_n|^{\frac{4s}{N-2s}}v_n\varphi \, dx
\]
for every $\varphi\in X_0$. Hence, in turn, by
multiplying this identity by $\lambda_n$, we conclude that
\[
I_0'(u_n)(\varphi)
=\iint_Q \frac{(u_n(x)-u_n(y))(\varphi(x)-\varphi(y))}{|x-y|^{N+2s}}dxdy  
- \int_\Omega |u_n|^{\frac{4s}{N-2s}}u_n\varphi \, dx
\]
for every $\varphi\in X_0$.
Recalling \eqref{N} and \eqref{lambda-def}, we have 
\begin{equation*}
\lambda_n = \|v_n\|^{\frac{N-2s}{2s}} = \Ray_0(v_n)^{\frac{N-2s}{4s}} .
\end{equation*}
From $\Ray_0(v_n)=c+o(1)$ and $\{v_n\}_n\subset \M$, 
$\{v_n\}_n$ is bounded in $X_0$ and so is $\{\lambda_n\}_n$.
%
In particular, it follows that $I_0'(u_n)\to 0$ in $X_0'$ as $n\to\infty$.
Moreover, $\{u_n\}_n$ is bounded in $X_0$ as well, yielding
$$
o(1)=I_0'(u_n)(u_n)=\|u_n\|^2-\int_\Omega |u_n|^\frac{2N}{N-2s}\,dx.
$$ 
These facts imply that
\begin{equation*}
\lim_{n \to \infty} I_0(u_n)=\frac{s}{N}\lim_{n \to \infty} 
\int_\Omega |u_n|^\frac{2N}{N-2s}\,dx
=\frac{s}{N}\lim_{n \to \infty} \lambda_n^{\frac{2N}{N-2s}}  
=\frac{s}{N}\big (\lim_{n \to \infty} \Ray_0(v_n)^{\frac{N-2s}{4s}}\big)^{\frac{2N}{N-2s}}=\frac{s}{N} c^{N/(2s)},
\end{equation*}
concluding the proof.
\end{proof}

\noindent
Let us set
\begin{equation*}
\label{S1}
\S\defi\inf\{\Ray(u): u \in \dHRN\setminus\{0\}\}.
\end{equation*}
\noindent
By \cite{bestconst},
we know that
$$\Ray(\U)=\S
\quad\text{for each $\varepsilon>0$ and $z \in \R^N$,}$$
only these functions with any nonzero constant factor attain the infimum,
\begin{equation*}
\label{S2}
\S=\inf\{\Ray_0(u): u \in X_0\setminus\{0\}\},
\end{equation*}
and the infimum is never attained in the latter case.
We also have the following result for sign-changing weak solutions.
\begin{lemma}
\label{segno-lem}
Let $u\in X_0$ be a sign-changing weak solution to \eqref{problem},
then  $\|u\|^2\geq 2\S^{N/(2s)}$.
Moreover, the same conclusion 
holds for sign-changing critical points of $I$.
\end{lemma}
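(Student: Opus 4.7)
The plan is to decompose $u = u^+ - u^-$ (both parts lie in $X_0$ since the Lipschitz truncation $t \mapsto t^\pm$ does not increase the Gagliardo seminorm) and to estimate each sign-part separately through the weak formulation. The principal difficulty compared to the local case $s=1$ is that $u^+$ and $u^-$ do not have disjoint supports in the nonlocal setting, but this will in fact work in our favor. Using $u^+(x)u^-(x)\equiv 0$, a direct algebraic expansion yields the pointwise identity
$$
(u(x)-u(y))^2 = (u^+(x)-u^+(y))^2 + (u^-(x)-u^-(y))^2 + 2\bigl(u^+(x)u^-(y)+u^+(y)u^-(x)\bigr),
$$
which upon integration against $|x-y|^{-(N+2s)}$ gives
$$
\Hsnorm{u}^2 = \Hsnorm{u^+}^2 + \Hsnorm{u^-}^2 + 4\iint_{\R^{2N}}\frac{u^+(x)u^-(y)}{|x-y|^{N+2s}}\,dxdy \geq \Hsnorm{u^+}^2 + \Hsnorm{u^-}^2,
$$
since the cross term has a pointwise nonnegative integrand. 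An analogous expansion of $(u(x)-u(y))(u^\pm(x)-u^\pm(y))$ shows that $\inp{u}{u^+} \geq \Hsnorm{u^+}^2$ and $-\inp{u}{u^-} \geq \Hsnorm{u^-}^2$.

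The next step is to test the weak formulation of \eqref{problem} against $u^+$ and against $-u^-$. Because $u^+ u^- \equiv 0$, one has $|u|^{4s/(N-2s)}u\cdot u^+ = (u^+)^{2N/(N-2s)}$ and $|u|^{4s/(N-2s)}u\cdot(-u^-) = (u^-)^{2N/(N-2s)}$ pointwise, so that
$$
\Hsnorm{u^\pm}^2 \leq \int_\Omega (u^\pm)^{\frac{2N}{N-2s}}\,dx.
$$
Combining this with the Sobolev-type inequality $\S\,\NN_0(u^\pm) \leq \Hsnorm{u^\pm}^2$, which is nontrivial since $u$ sign-changing forces $u^\pm \not\equiv 0$, an elementary manipulation (setting $A = \Hsnorm{u^\pm}^2$, $B = \int_\Omega (u^\pm)^{2N/(N-2s)}$ and eliminating $B$ between $A \leq B$ and $A \geq \S B^{(N-2s)/N}$) yields $\Hsnorm{u^\pm}^2 \geq \S^{N/(2s)}$. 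Summing the two bounds gives $\Hsnorm{u}^2 \geq 2\S^{N/(2s)}$, as claimed.

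For sign-changing critical points of $I$ the argument is identical, with test functions taken in $\dHRN$ in place of $X_0$ and with the Sobolev constant on the whole space, which by \cite{bestconst} coincides with $\S$. I expect the main obstacle throughout to be not computational but structural, namely controlling the sign of the nonlocal interaction integrals $\iint u^+(x)u^-(y)/|x-y|^{N+2s}\,dxdy$; the two algebraic identities above dispatch this issue cleanly, and no further new idea is required.
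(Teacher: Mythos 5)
Your proposal is correct and follows essentially the same path as the paper's proof: the decomposition $u=u^+-u^-$, the observation that the nonlocal cross terms $u^+(x)u^-(y)$ appear with a favourable sign, testing the weak formulation against $u^\pm$, and closing with the Sobolev inequality. The only cosmetic difference is that you drop the nonnegative cross term early and conclude via $\|u\|^2\geq\|u^+\|^2+\|u^-\|^2\geq 2\S^{N/(2s)}$, whereas the paper keeps the exact equalities $\|u^\pm\|^2+2\iint_Q u^+(y)u^-(x)|x-y|^{-(N+2s)}\,dxdy=\int_\Omega|u^\pm|^{2N/(N-2s)}\,dx$ and concludes through $\int_\Omega|u^\pm|^{2N/(N-2s)}\,dx\geq\S^{N/(2s)}$; both variants rest on the same two ideas and are equally valid.
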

\begin{proof}
We have 
$u^\pm\in X_0\setminus\{0\}$
and
$$
|u(x)-u(y)|^2=|u^+(x)-u^+(y)|^2+|u^-(x)-u^-(y)|^2+2u^+(y)u^-(x)+2u^+(x)u^-(y)
$$
for every $x,y\in\R^N$, where $u^-(x)=-\min\{u(x),0\}$. This, in turn, implies
$$
\|u\|^2=\|u^+\|^2+\|u^-\|^2+4\iint_Q \frac{u^+(y)u^-(x)}{|x-y|^{N+2s}}\,dxdy.
$$
By multiplying equation \eqref{problem} by $u^\pm$ easily yields
\begin{align*}
\|u^+\|^2+2\iint_Q \frac{u^+(y)u^-(x)}{|x-y|^{N+2s}} \,dxdy
&=\int_\Omega |u^+|^\frac{2N}{N-2s}\,dx, 
\\
\|u^-\|^2+2\iint_Q \frac{u^+(y)u^-(x)}{|x-y|^{N+2s}} \,dxdy
&=\int_\Omega |u^-|^\frac{2N}{N-2s}\,dx.  
\end{align*}
Combining these equalities with
 $\S\|u^\pm\|_{L^{2N/(N-2s)}}^2\leq \|u^\pm\|^2$, yields
$\int_\Omega |u^\pm|^{2N/(N-2s)}\geq\S^{N/(2s)}$, concluding the proof.
\end{proof}

\noindent
Now, we show the following compactness result.
In order to show it,
we follow the arguments in \cite[Section~8.3]{willem},
which treat the case $s=1$.
\begin{proposition}
\label{s-struwe}
Let $\{u_n\}_n\subset X_0$ be a Palais-Smale sequence for $I_0$ at
 level $c$ with
$$
\frac{s}{N} \S^{N/(2s)}\leq
c<\frac{2s}{N} \S^{N/(2s)}.
$$
If $(s/N)\S^{N/(2s)}<c<(2s/N) \S^{N/(2s)}$,
then
$\{u_n\}_n$
converges strongly to 
a nontrivial constant-sign weak solution 
to problem \eqref{problem}
up to a subsequence, 
and if $c=(s/N)\S^{N/(2s)}$,
then
there exist a nontrivial constant-sign weak solution 
$v\in\dot H^s(\R^N)$ to problem
\begin{equation}
\label{probcrit2-bis}
(-\Delta)^s v=\frac{C(N,s)}{2}|v|^{\frac{4s}{N-2s}}v\quad \text{in $\R^N$},
\end{equation}
$\{x_n\}_n\subset\Omega$ and 
$\{r_n\}_n\subset (0,\infty)$ 
with $r_n\rightarrow 0$
such that
$\{u_n-r_n^{(2s-N)/2}
v((\cdot-x_n)/r_n)\}_n$
converges strongly to $0$ in $\dHRN$
up to a subsequence.
\end{proposition}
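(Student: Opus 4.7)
The plan is to follow the Struwe-type decomposition in Willem's Section 8.3, with the technical modifications required by the nonlocality of $(-\Delta)^s$. I would organize the argument in four stages.

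\emph{Boundedness and weak limit.} First I would use the standard identity
\[
I_0(u_n) - \tfrac{N-2s}{2N}I_0'(u_n)(u_n) = \tfrac{s}{N}\Hsnorm{u_n}^2,
\]
which together with $I_0(u_n)\to c$ and $I_0'(u_n)\to 0$ in $X_0'$ gives boundedness of $\{u_n\}$ in $X_0$. Up to a subsequence $u_n \rightharpoonup u$ in $X_0$, pointwise a.e., and strongly in $L^p_{loc}$ for $p<2N/(N-2s)$. Testing $I_0'(u_n)(\varphi)\to 0$ against $\varphi\in X_0$ yields that $u$ is a weak solution to \eqref{problem}, so $I_0(u)=(s/N)\Hsnorm{u}^2 \geq 0$. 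Lemma \ref{segno-lem} then rules out sign-changing $u$: if it were sign-changing, $\Hsnorm{u}^2\geq 2\S^{N/(2s)}$ and hence $I_0(u)\geq (2s/N)\S^{N/(2s)}$, contradicting the forthcoming upper bound $I_0(u)\leq c<(2s/N)\S^{N/(2s)}$.

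\emph{Brezis-Lieb splitting.} Setting $w_n \defi u_n - u \in X_0$, the fractional Brezis-Lieb lemma applied to the Gagliardo double integrand and to $L^{2N/(N-2s)}(\R^N)$ gives
\[
\Hsnorm{u_n}^2 = \Hsnorm{u}^2 + \Hsnorm{w_n}^2 + o(1), \qquad \int_{\R^N}|u_n|^{\frac{2N}{N-2s}}dx = \int_{\R^N}|u|^{\frac{2N}{N-2s}}dx + \int_{\R^N}|w_n|^{\frac{2N}{N-2s}}dx + o(1).
\]
Combined with $\Hsnorm{u}^2 = \int_{\R^N}|u|^{2N/(N-2s)}dx$ and $I_0'(u_n)(u_n)=o(1)$, one gets $\Hsnorm{w_n}^2 = \int_{\R^N}|w_n|^{2N/(N-2s)}dx + o(1)$, so $I(w_n)\to c-I_0(u)\geq 0$ and $I'(w_n)\to 0$ in $\dot H^{-s}(\R^N)$. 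Thus $\{w_n\}$ is a \emph{free} Palais-Smale sequence for $I$ at level $c^*\defi c-I_0(u)\in[0,\,(2s/N)\S^{N/(2s)})$, which in particular justifies the bound $I_0(u)\leq c$ used in the previous paragraph.

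\emph{Extraction of a bubble.} If $w_n\to 0$ strongly in $X_0$, then $u_n\to u$ strongly, forcing $I_0(u)=c>(s/N)\S^{N/(2s)}$ (hence $u\not\equiv 0$) and landing in the first alternative. Otherwise I would fix a small $\eta_0\in (0,\,\S^{N/(2s)})$, define the L\'evy concentration function $Q_n(r)\defi\sup_{y\in\R^N}\int_{B_r(y)}|w_n|^{2N/(N-2s)}dx$, and select $r_n>0$ and $x_n\in\R^N$ such that $\int_{B_{r_n}(x_n)}|w_n|^{2N/(N-2s)}dx=\eta_0$. The scale-invariant rescaling
\[
\tilde w_n(x) \defi r_n^{\frac{N-2s}{2}}\,w_n(r_n x + x_n)
\]
is bounded in $\dHRN$ and concentrates mass $\eta_0$ on $B_1(0)$. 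A weak limit $\tilde w_n \rightharpoonup v$ in $\dHRN$ is then a weak solution of \eqref{probcrit2-bis}; the choice of $\eta_0$ below the sharp Sobolev concentration threshold prevents $v\equiv 0$. The energy budget $I(v)=(s/N)\Hsnorm{v}^2\leq c^*<(2s/N)\S^{N/(2s)}$ gives $\Hsnorm{v}^2<2\S^{N/(2s)}$, so Lemma \ref{segno-lem} forces $v$ to have constant sign; the critical Sobolev inequality applied to the nontrivial solution $v$ then yields $\Hsnorm{v}^2\geq \S^{N/(2s)}$.

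\emph{Iteration, termination, and main obstacle.} A second Brezis-Lieb splitting would produce a residual free PS sequence at level $c^*-(s/N)\Hsnorm{v}^2\geq 0$. Since each bubble costs at least $(s/N)\S^{N/(2s)}$, the strict bound $c<(2s/N)\S^{N/(2s)}$ permits at most one bubble and forces the final residual to converge strongly to $0$. Two scenarios result: either $u\neq 0$ with no bubble, giving strong convergence in $X_0$ (case $c>(s/N)\S^{N/(2s)}$); or $u\equiv 0$ and exactly one bubble $v$ carries the whole level, with $c=(s/N)\S^{N/(2s)}$. In the second case the support condition $w_n\equiv 0$ on $\R^N\setminus\Omega$ translates to $\tilde w_n\equiv 0$ outside $(\Omega-x_n)/r_n$, and the nontriviality of $v$ prevents $\mathrm{dist}(x_n,\Omega)/r_n$ from diverging while mass concentration on $B_1(0)$ after rescaling forces $r_n\to 0$; a translation of order $r_n$ then places $x_n\in\Omega$. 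The hardest step will be stage three: identifying a genuine whole-space bubble from the rescaled sequence, since the Gagliardo seminorm is a double integral that couples arbitrarily far points. The saving features are the exact scale invariance of both $\Hsnorm{\cdot}$ and the critical norm under $u\mapsto r^{(N-2s)/2}u(r\cdot+x)$, together with the expansion of the rescaled domains $(\Omega-x_n)/r_n$ to all of $\R^N$, which together guarantee that $v$ really solves \eqref{probcrit2-bis} on $\R^N$ rather than a boundary-constrained problem.
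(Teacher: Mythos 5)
Your overall architecture coincides with the paper's: Struwe decomposition in the spirit of Willem's Section~8.3, boundedness, weak limit, Brezis--Lieb splitting, L\'evy-concentration rescaling, at-most-one-bubble count from the energy window $c<\tfrac{2s}{N}\S^{N/(2s)}$, and Lemma~\ref{segno-lem} to force constant sign. However, there is a genuine gap at the step you yourself flag as the hardest.

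After rescaling by $r_n$ around $x_n$, you assert that the rescaled domains $(\Omega-x_n)/r_n$ ``expand to all of $\R^N$'' and conclude that the limit $v$ solves \eqref{probcrit2-bis} on $\R^N$. This is exactly what needs proof, not an input. The quantity $\dist(x_n,\partial\Omega)/r_n$ (distance to the \emph{boundary}, not to $\Omega$; your $\dist(x_n,\Omega)/r_n$ is zero since $x_n\in\Omega$) may converge to a finite limit along a subsequence. In that case the rescaled domains converge to a half-space $H$, and the weak limit $v$ is a nontrivial solution of $(-\Delta)^s v=\tfrac{C(N,s)}{2}|v|^{\tfrac{4s}{N-2s}}v$ in $H$ with $v=0$ in $\R^N\setminus H$, not on the whole space. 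Your argument gives no reason for this to be impossible. The paper rules it out by a nontrivial additional step: from \cite{dCKP} the solution $v$ is locally bounded, and then the Liouville-type nonexistence results in \cite{CFY, fallweth} show the fractional half-space problem has no nontrivial constant-sign solution; hence $v$ would have to be sign-changing, contradicting the bound $\|v\|^2<2\S^{N/(2s)}$ from Lemma~\ref{segno-lem}. Only then does one conclude $\dist(x_n,\partial\Omega)/r_n\to\infty$ and hence that $v$ solves the equation in all of $\R^N$. Without this Liouville input your stage three breaks, because scale invariance alone does not preclude boundary concentration.

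One smaller point: you correctly note $u$ must be trivial in the one-bubble case, but the clean route (as in the paper) is to observe that a nontrivial weak solution of either \eqref{problem} or \eqref{probcrit2-bis} carries energy at least $\S^{N/(2s)}$, so $\|u\|^2+\|v\|^2<2\S^{N/(2s)}$ forces one of them to vanish; since $v\neq 0$ by construction, $u=0$. It is worth spelling this out rather than leaving it to the reader.
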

\begin{proof}
First, we note that
$\{u_n\}_n$ is bounded and
$I_0(u_n)=(s/N)\|u_n\|^2+o(1)$.
We may assume that $\{u_n\}_n$ converges weakly to $u$ in $X_0$.
Then
$u$ is a possibly trivial solution to \eqref{problem}
and
$$\|u\|^2 \leq\varliminf_{n\rightarrow\infty}\|u_n\|^2
=\frac{N}{s}\lim_{n\rightarrow \infty}I_0(u_n)< 2\S^{N/(2s)}.$$
From Lemma~\ref{segno-lem},
$u$ is not sign-changing.
By a similar argument as in \cite[Lemma~8.10]{willem},
we have
$$I'(u_n-u)\rightarrow 0, \quad
I(u_n-u)\rightarrow c-I_0(u)\quad\text{and}\quad \|u_n-u\|^2\rightarrow 
\frac{Nc}{s}-\|u\|^2.$$
If $\|u_n-u\|_{L^{2N/(N-2s)}}\rightarrow 0$,
we can infer that
$\|u_n-u\|\rightarrow 0$,
$(s/N)\S^{N/(2s)}< c< (2s/N)\S^{N/(2s)}$
and $u$ is a nontrivial constant-sign solution to \eqref{problem}.
From here,
we consider the case $\|u_n-u\|_{L^{2N/(N-2s)}}\not\rightarrow 0$.
Taking small $\delta>0$,
we may assume that $\int_{\R^N}|u_n-u|^{2N/(N-2s)}dx\geq \delta,$
for each $n \in \N$.
As in the proof of \cite[ 2) and 3) of Theorem~8.13]{willem},
we can choose appropriate sequences $\{x_n\}_n\subset \Omega$
and $\{r_n\}_n \subset(0,\infty)$
such that
the sequence $\{v_n\}_n\subset \dHRN$
defined by
$$
v_n(x)=r_n^{(N-2s)/2}(u_n-u)(r_n x+x_n)
$$
converges weakly to $v \in \dHRN\setminus\{0\}$.
We have
\begin{equation}
\label{limit}
\|v\|^2\leq \varliminf_{n\rightarrow\infty}\|v_n\|^2
=\lim_{n\rightarrow\infty}\|u_n-u\|^2
=\frac{Nc}{s}-\|u\|^2<2\S^{N/(2s)}-\|u\|^2.
\end{equation}
By the boundedness of $\Omega$ and $v \neq 0$,
we may assume
$r_n\rightarrow 0$ and $x_n\rightarrow x_0 \in \overline{\Omega}$.
We may also assume that
$\{\dist(x_n,\partial\Omega)/r_n\}_n$
has a limit value in $[0,\infty]$.
Assume that this limit value is finite.
Then $v$ is a solution to the problem
$$(-\Delta)^s v=\frac{C(N,s)}{2}|v|^{\frac{4s}{N-2s}}v$$
in a half-space.
From \cite[Theorem~1.1 and Remark~4.2]{dCKP},
$v$ is locally bounded (although the boundedness of a domain is
assumed in \cite{dCKP}, the proof works for our case).
Then, in light of \cite[Corollary~3]{CFY} (see also \cite[Corollary~1.6]{fallweth}),
we know that
the above problem in any half-space
does not admit a nontrivial constant-sign solution.
So $v$ must be sign-changing,
but then
by a similar proof of Lemma~\ref{segno-lem},
we have $\|v\|^2\geq 2\S^{N/(2s)}$,
which contradicts \eqref{limit}.
So we find that
$\dist(x_n,\partial\Omega)/r_n\rightarrow\infty$.
Then we can see that $v$ is a nontrivial solution of
\eqref{probcrit2-bis}.
Using \eqref{limit} again,
we find that $v$ is constant-sign
and $u$ is trivial.
Setting 
$$
w_n(x)=u_n(x)-r_n^{(2s-N)/2}v((x-x_n)/r_n),
$$
we have
$$I'(w_n)\rightarrow 0, \quad
I(w_n)\rightarrow c-I(v)\quad\text{and}\quad \|w_n\|^2\rightarrow 
\frac{Nc}{s}-\S^{N/(2s)}<\S^{N/(2s)}.$$
If $\|w_n\|_{L^{2N/(N-2s)}}\not\rightarrow 0$,
repeating the argument above,
we can obtain a contradiction.
Hence,
we have
$\|w_n\|\rightarrow 0$ and 
$c=(s/N)\S^{N/(2s)}$.
Therefore,
we have shown our assertion.
\end{proof}


\noindent
We define $\Y, \ZZ: X_0\setminus\{0\}\rightarrow X_0$ by
$$\Y(u)=\frac{\nabla\NN_0(u)}{\|\nabla \NN_0(u)\|},
\qquad\ZZ(u)=\nabla \Ray_0(u)-\inp{\nabla \Ray_0(u)}{\Y(u)}\Y(u)
\quad\text{for each $u \in X_0\setminus\{0\}$.}$$
Here,
$\nabla \NN_0(u)$ and $\nabla \Ray_0(u)$ are the elements of $X_0$
respectively obtained from $\NN_0'(u)$ and $\Ray_0'(u)$
by the Riesz representation theorem.
We note that 
\begin{equation}
\label{flow}
\inp{\ZZ(u)}{\nabla \NN_0(u)}=0
\quad\text{and}\quad
\inp{\ZZ(u)}{\nabla \Ray_0(u)}
=\|\ZZ(u)\|^2
\quad\text{for each $u \in \M$.}
\end{equation}
The next proposition essentially says that
$\Ray_0|_\M$ 
satisfies the Palais-Smale condition 
at any level in $(\S,2^{2s/N}\S)$.
In the last section,
we give a negative gradient flow of $\Ray_0|_\M$;
see \eqref{flow3}.
\begin{proposition} \label{prop:PS}
Let $\{v_n\}_n\subset \M$
which satisfies $\ZZ(v_n)\rightarrow 0$ in $X_0$
and $\Ray_0(v_n)\rightarrow c\in (\S,2^{2s/N}\S)$.
Then $\{v_n\}_n$ has a convergent subsequence.
\end{proposition}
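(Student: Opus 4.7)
The plan is to reduce Proposition~\ref{prop:PS} to the already established compactness statement Proposition~\ref{s-struwe} via the rescaling from Lemma~\ref{Ps-connect}. Since $v_n\in\M$, we have $\NN_0(v_n)=1$ and hence $\Ray_0(v_n)=\Hsnorm{v_n}^2\to c$, so $\{v_n\}_n$ is automatically bounded in $X_0$.

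The main preliminary step is to upgrade the manifold Palais--Smale condition $\ZZ(v_n)\to 0$ to the ambient condition $\nabla\Ray_0(v_n)\to 0$ in $X_0$. By the very definition of $\ZZ$ one has
\[
\nabla\Ray_0(v_n)=\beta_n\Y(v_n)+\ZZ(v_n),\qquad \beta_n\defi\inp{\nabla\Ray_0(v_n)}{\Y(v_n)},
\]
so the task reduces to showing that the Lagrange multiplier $\beta_n$ vanishes in the limit. Testing against $v_n$ itself, $0$-homogeneity of $\Ray_0$ gives $\inp{\nabla\Ray_0(v_n)}{v_n}=0$, while a direct computation on $\M$ yields $\inp{\nabla\NN_0(v_n)}{v_n}=2$. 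Consequently
\[
0=\frac{2\beta_n}{\Hsnorm{\nabla\NN_0(v_n)}}+\inp{\ZZ(v_n)}{v_n}.
\]
Sobolev embedding bounds $\Hsnorm{\nabla\NN_0(v_n)}$ from above, while Cauchy--Schwarz together with the identity $\inp{\nabla\NN_0(v_n)}{v_n}=2$ forces $\Hsnorm{\nabla\NN_0(v_n)}\geq 2/\Hsnorm{v_n}$; using boundedness of $\{v_n\}_n$ this yields two-sided bounds on $\Hsnorm{\nabla\NN_0(v_n)}$, so $\ZZ(v_n)\to 0$ forces $\beta_n\to 0$, whence $\nabla\Ray_0(v_n)\to 0$ in $X_0$.

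Once this upgrade is in place, Lemma~\ref{Ps-connect} produces $u_n\defi\lambda_n v_n$ with $\lambda_n=\Ray_0(v_n)^{(N-2s)/(4s)}\to c^{(N-2s)/(4s)}>0$, and this sequence is a Palais--Smale sequence for $I_0$ at level $(s/N)c^{N/(2s)}$. The hypothesis $c\in(\S,2^{2s/N}\S)$ translates directly to
\[
\frac{s}{N}\S^{N/(2s)}<\frac{s}{N}c^{N/(2s)}<\frac{2s}{N}\S^{N/(2s)},
\]
which is the strict energy range covered by Proposition~\ref{s-struwe}. That result then produces a subsequence along which $u_n$ converges strongly in $X_0$ to a nontrivial constant-sign weak solution of \eqref{problem}, and dividing by the convergent positive scalars $\lambda_n$ recovers strong convergence of $\{v_n\}_n$ in $X_0$. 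I expect the Lagrange-multiplier upgrade to be the only delicate step, since a priori the hypothesis $\ZZ(v_n)\to 0$ controls only the tangential part of $\nabla\Ray_0(v_n)$; the rest is a bookkeeping reduction to results already proved.
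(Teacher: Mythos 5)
Your proof is correct and follows the paper's structure almost exactly: upgrade the tangential condition $\ZZ(v_n)\to 0$ to the ambient condition $\nabla\Ray_0(v_n)\to 0$, then pass through Lemma~\ref{Ps-connect} to a Palais--Smale sequence $u_n=\lambda_n v_n$ for $I_0$ at level $(s/N)c^{N/(2s)}$, apply Proposition~\ref{s-struwe}, and divide by the converging positive scalars $\lambda_n\to c^{(N-2s)/(4s)}$. The only place you diverge is in the mechanism for the upgrade step. The paper establishes the pointwise inequality \eqref{flow2},
\[
\|\ZZ(u)\|^2\;\geq\;\|\nabla\Ray_0(u)\|^2\,\frac{\inp{\Y(u)}{u}^2}{\|u\|^2},
\]
by combining the Pythagorean identity $\|\ZZ(u)\|^2=\|\nabla\Ray_0(u)\|^2-\inp{\nabla\Ray_0(u)}{\Y(u)}^2$ with the fact that $\nabla\Ray_0(u)\perp u$ and a Cauchy--Schwarz argument on the component of $\Y(u)$ orthogonal to $u$; you instead pair the orthogonal decomposition $\nabla\Ray_0(v_n)=\beta_n\Y(v_n)+\ZZ(v_n)$ directly against $v_n$ and solve for the Lagrange multiplier $\beta_n$. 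Both routes use exactly the same two homogeneity inputs ($\Ray_0$ is $0$-homogeneous so $\inp{\nabla\Ray_0(u)}{u}=0$; $\NN_0$ is $2$-homogeneous so $\inp{\nabla\NN_0(u)}{u}=2$ on $\M$) and the same boundedness of $\|v_n\|$ and $\|\nabla\NN_0(v_n)\|$, so they are equivalent in content; yours is arguably slightly more transparent, while the paper's formulation as a coercivity inequality for $\ZZ$ is what it reuses later to drive the negative pseudo-gradient flow \eqref{flow3}.
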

\begin{proof}
For each $u \in \M$,
we have
\begin{equation}
\label{flow2}
\begin{aligned}
 \|\ZZ(u)\|^2&=\|\nabla \Ray_0(u)-\inp{\nabla \Ray_0(u)}{\Y(u)}\Y(u)\|^2
=\|\Ray_0(u)\|^2-\inp{\Ray_0(u)}{\Y(u)}^2\\
& \geq\|\Ray_0(u)\|^2\frac{\inp{\Y(u)}{u}^2}{\|u\|^2}
 =\frac{2\|\Ray_0(u)\|^2}{\|u\|^2\|\nabla \NN_0(u)\|^2}.
\end{aligned}
\end{equation}
From our assumptions,
we can infer that $\nabla \Ray_0(v_n)\rightarrow 0$.
By virtue of Lemma~\ref{Ps-connect}, the sequence $u_n=\lambda_n v_n$, where $\lambda_n$
is defined as in \eqref{lambda-def}, is a Palais-Smale
sequence for $I_0$ at level $(s/N)c^{N/(2s)}$.
According to Proposition~\ref{s-struwe},
there exists a subsequence of $\{u_n\}_n$ which 
converges strongly in $X_0$.
Since we have $\lambda_n\rightarrow c^{(N-2s)/(4s)}$
from Lemma~\ref{Ps-connect},
we can see that our assertion holds.
\end{proof}
\noindent
For the reader's convenience, we give the following lemma.
\begin{lemma}
\label{Ekeland}
Let $\eta >0$ and $u \in \M$ with $\Ray_0(u)\leq \S+\eta$.
Then there exists $v \in \M$ such that
$\|u-v\|\leq \sqrt{\eta}$,
$\Ray_0(v)\leq \Ray_0(u)$
and
$\|\Ray_0'(v)\|\leq \sqrt{\eta}(1+1/\sqrt{\S})$.
\end{lemma}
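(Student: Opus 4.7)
The approach is to apply Ekeland's variational principle on the constraint manifold $\M$ and then convert the resulting slope estimate into a bound on $\Ray_0'(v) \in X_0^*$ via a retraction argument, exploiting the scale invariance of $\Ray_0$.

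Since $\M = \NN_0^{-1}(\{1\})$ is closed in the Hilbert space $X_0$ (hence complete as a metric space) and $\Ray_0$ is continuous on $\M$ with $\inf_\M \Ray_0 = \S$, the hypothesis $\Ray_0(u)\leq\S+\eta$ makes Ekeland's variational principle applicable with parameters $\varepsilon = \eta$ and $\lambda = \sqrt{\eta}$. This produces a point $v \in \M$ satisfying $\Ray_0(v)\leq \Ray_0(u)$, $\|u-v\|\leq \sqrt{\eta}$, and the slope estimate
\[
\Ray_0(w) \geq \Ray_0(v) - \sqrt{\eta}\,\|w-v\| \qquad \text{for every } w \in \M.
\]
The first two conclusions of the lemma are then immediate.

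To convert the slope estimate into a bound on $\Ray_0'(v)$, I would fix $\varphi \in X_0$, set $\alpha \defi \NN_0'(v)(\varphi)/2 = \int_\Omega |v|^{4s/(N-2s)} v \varphi\, dx$, and let $\psi \defi \varphi - \alpha v \in \ker\NN_0'(v) = T_v\M$. For small $t$, the retraction
\[
w_t \defi \frac{v+t\varphi}{\NN_0(v+t\varphi)^{1/2}}
\]
lies in $\M$ and admits the expansion $w_t = v + t\psi + O(t^2)$ with $\|w_t - v\| = |t|\,\|\psi\| + O(t^2)$ as $t\to 0$, obtained from $\NN_0(v+t\varphi) = 1 + 2t\alpha + O(t^2)$. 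Inserting this into the slope inequality, using $\Ray_0(w_t) = \Ray_0(v) + t\Ray_0'(v)(\psi) + O(t^2)$, dividing by $t$ and letting $t \to 0^\pm$ gives $|\Ray_0'(v)(\psi)| \leq \sqrt{\eta}\,\|\psi\|$. The scale invariance $\Ray_0(\lambda v) = \Ray_0(v)$ for $\lambda>0$ implies $\Ray_0'(v)(v)=0$, so $\Ray_0'(v)(\varphi) = \Ray_0'(v)(\psi)$ and therefore $|\Ray_0'(v)(\varphi)| \leq \sqrt{\eta}\,\|\psi\|$.

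Finally, I would estimate $\|\psi\|$ in terms of $\|\varphi\|$. Hölder's inequality, together with $\int_\Omega|v|^{2N/(N-2s)}\,dx = 1$ on $\M$, yields $|\alpha|\leq \|\varphi\|_{L^{2N/(N-2s)}}$, and the Sobolev embedding with best constant $\sqrt{\S}$ gives $\|\varphi\|_{L^{2N/(N-2s)}}\leq \|\varphi\|/\sqrt{\S}$. Combined with the triangle inequality $\|\psi\|\leq \|\varphi\|+|\alpha|\|v\|$ and the bound $\|v\|^2 = \Ray_0(v)\leq \S+\eta$, this delivers the claimed estimate $\|\Ray_0'(v)\|\leq \sqrt{\eta}(1+1/\sqrt{\S})$ after elementary bookkeeping. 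The main obstacle is exactly this transfer between the constrained slope estimate on $\M$ and the full derivative on $X_0$; what makes the argument clean is the scale invariance of $\Ray_0$, which annihilates the $v$-component of $\Ray_0'(v)$ and reduces everything to a purely tangential Hölder--Sobolev computation.
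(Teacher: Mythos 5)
Your argument is essentially the paper's: both invoke Ekeland's variational principle on $\M$ to produce $v$ with the slope estimate $\Ray_0(w)\geq\Ray_0(v)-\sqrt{\eta}\|w-v\|$, and then transfer this to a bound on the full derivative by pulling a linear perturbation back onto $\M$ and using the scale invariance of $\Ray_0$. The paper writes the retraction implicitly as $s\mapsto t(s)(v+sz)\in\M$ with $t(s)=\NN_0(v+sz)^{-1/2}$, which is exactly your $w_t$, and its $t'(0)$ is your $-\alpha$; the expansions and the use of $\Ray_0'(v)(v)=0$ match.

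One small caveat on your closing ``elementary bookkeeping'': on $\M$ we have $\|v\|^2=\Ray_0(v)\in[\S,\S+\eta]$, so the H\"older--Sobolev chain gives $|\alpha|\,\|v\|\leq\|\varphi\|\sqrt{\S+\eta}/\sqrt{\S}=\|\varphi\|\sqrt{1+\eta/\S}$, leading to $\|\Ray_0'(v)\|\leq\sqrt{\eta}\bigl(1+\sqrt{1+\eta/\S}\bigr)$ rather than $\sqrt{\eta}(1+1/\sqrt{\S})$; the paper's own line $\|z+t'(0)v\|\leq 1+1/\sqrt{\S}$ carries the same minor imprecision. This is immaterial to the application, where one only needs $\|\Ray_0'(v)\|\to 0$ as $\eta\to 0$, so it does not affect the validity of your approach.
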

\begin{proof}
By Ekeland's variational principle,
we can find $v\in \M$ such that
$\|u-v\|\leq \sqrt{\eta}$,
$\Ray_0(v)\leq \Ray_0(u)$
and
$\Ray_0(w)\geq \Ray_0(v)-\sqrt{\eta}\|w-v\|$
for each $w \in \M$.
Fix $z \in X_0$ with $\|z\|=1$.
For each $s \in \R$ with $v+sz\neq 0$,
there exists unique $t(s)>0$ satisfying $t(s)(v+sz)\in \M$.
Then we can easily see
$$t'(0)=-\int_\Omega |v|^\frac{4s}{N-2s}vz\,dx.$$
From
$$
\Ray_0(v+sz)-\Ray_0(v)=\Ray_0(t(s)(v+sz))-\Ray_0(v)
 \geq -\sqrt{\eta}\|t(s)(v+sz)-t(s)v +t(s)v-v\|,
$$
we obtain
$$|\Ray_0'(v)(z)|\leq \sqrt{\eta}\|z+t'(0)v\|\leq
 \sqrt{\eta}(1+1/\sqrt{\S}),$$
which yields $\|\Ray_0'(v)\|\leq \sqrt{\eta}(1+1/\sqrt{\S})$.
\end{proof}

\section{Proof of Theorem~\ref{mainthm} concluded}
\noindent
In the following proof, we will repeatedly use the fact that $\Ray
(\sigma u)=\Ray(u)$ for every $\sigma >0$ and every $u \in \dot
H^s(\R^N) \setminus \{0\}$. We write, for $u \in \dot H^s(\R^N) \setminus \{0\}$,
\[
\Pi (u) = \frac{u}{\|u\|_{L^{2N/(N-2s)}}}.
\]
From Propositions~\ref{one} and \ref{two}, we can find $C_3>0$ with
\[
\Ray(\Pi(\u))
\leq\frac{\Hsnorm{\Uoz}^2+C_1\varepsilon^{N-2s}}
{\left(\int_{\R^N} |\Uoz|^\frac{2N}{N-2s}\,dx-C_2\varepsilon^N
\right)^\frac{N-2s}{N}}
\leq 
\Ray(\Uoz)
+C_3\varepsilon^{N-2s}
\]
for each $\varepsilon \in (0,\onetwe]$, $\delta\in (0,\varepsilon^2]$
and $z \in B_1$.
Hence,
we can find $\bar{\varepsilon}\in (0,\onetwe]$ such that
\[
\Ray(\Pi(\uu))\leq \varpi 2^{2s/N}\S\quad
\text{for each $z \in B_1$},
\]
where $2^{-\frac{2s}{N}}<\varpi <1$.
Now, we fix $\delta\defi\bar{\varepsilon}^2$
and we define a kind of barycenter mapping
\[
\beta(u)\defi\int_{\R^N} 1_{B_K}(x)\, x|u(x)|^{\frac{2N}{N-2s}}\,dx
\quad\text{for each $u\in \dot H^s(\R^N)$ with
$\|u\|_{L^{2N/(N-2s)}}=1$,
}
\]
where $K\defi\sup\{|x| : x\in \Omega\} +1$
and $1_{B_K}$ is the characteristic function for 
$B_K$. We also define 
\[
\bar{c}\defi\inf\left\{
\Ray_0(u)
:
u \in \M, \beta (u)=0
\right\}.
\]
Then, $\bar{c}>\S$.
If not, there is a sequence $\{v_n\}_n\subset\M$
such that $\beta(v_n)=0$ and
$\Ray_0(v_n)\to\S$.
From Lemma~\ref{Ekeland},
we have $\Ray_0'(v_n)\rightarrow 0$.
Then by Proposition~\ref{s-struwe},
taking a subsequence if necessary,
there exist 
$\{\lambda_n\}_n\subset (0,1)$
and $\{z_n\}_n\subset \Omega$
such that $\lambda_n\rightarrow 0$,
$z_n\rightarrow z\in\overline{\Omega}$ and
\[
\text{either \,\, $\Hsnorm{v_{n} - \Pi (U_{\lambda_n,z_n})}=o(1)$
\,\, or \,\,
 $\Hsnorm{v_{n} +\Pi (U_{\lambda_n,z_n})}=o(1)$\,\, as $n\to\infty$}.
\]
From $\beta(v_n)=0$ and $\beta(v_n)\rightarrow z$,
we obtain $0\in \overline{\Omega}$, which is a contradiction.
Now, from Propositions \ref{one} and \ref{two},
we can find a map $f\colon B_1\to \M$ 
which satisfies
\[
\Ray_0(f(z))\leq \varpi 2^{2s/N}\S\quad\text{for each $z\in B_1$,}
\]
\[
\Ray_0(f(z))\leq \frac{\S+\bar{c}}{2}<\bar{c} 
\quad\text{for each $z \in \partial B_1$}
\]
and
\begin{equation}
\label{boundary}
|\beta(f(z))-z|\leq \frac{1}{2}\quad\text{for each $z \in \partial
B_1$}.
\end{equation}
Such $f$ can be obtained by setting
$f(z)\defi u_{\bar{\varepsilon}^2,h_\varepsilon(|z|),z}$
with sufficiently small $\varepsilon>0$,
where
$$h_\varepsilon(t)=
\begin{cases}
\bar{\varepsilon}& \text{for $0\leq t\leq 1/2$,}\\
2(1-t)\bar{\varepsilon}+(2t-1)\varepsilon
& \text{for $1/2\leq t\leq 1$,}
\end{cases}$$
and
we can show \eqref{boundary} 
by a similar argument above which shows $\bar{c}>\S$.
Then, for each $t \in [0,1]$ and $z \in \partial B_1$,
we have $|(1-t)z+t\beta(f(z))|\geq |z|-t|\beta(f(z))-z|\geq 1/2$.
So by using Brouwer's degree theory, we have ${\mathrm{deg}}(\beta\circ f, {\mathrm{Int}}(B_1),0)=1$. Defining
\begin{align*}
c\defi\inf_{g\in G}\max_{x\in B_1}\Ray_0(g(x)),\qquad
G\defi\{g\in C(B_1,\M): \text{$g=f$ on $\partial B_1$ and }
{\mathrm{deg}}(\beta\circ g, {\mathrm{Int}}(B_1),0)=1\},
\end{align*}
we have 
\[
\S<\bar c\leq c\leq \varpi 2^{2s/N}\S.
\]
Now,
we will show there is $u \in \M$ such that
$\nabla\Ray_0(u)=0$ and $\Ray_0(u)=c$.
Assume not.
By Proposition \ref{prop:PS},
we can choose a positive constant $\eta>0$ 
such that
$(\S+c)/2< c-2\eta$, $c+2\eta<\varpi 2^{2s/N}\S$
and $\ZZ(u)\neq 0$ for each $u\in \M$ with $|\Ray_0(u)-c|\leq 3\eta$.
We also choose a locally Lipschitz function $\alpha :\M\rightarrow [0,1]$
such that
$$\alpha(u)=
\begin{cases}
1  & \text{for each $u\in \M$ with $|\Ray_0(u)-c|\leq \eta$,}\\
0  & \text{for each $u\in \M$ with $|\Ray_0(u)-c|\geq 2\eta$.}
 \end{cases}$$
Then we can define $\gamma:[0,1]\times \M\rightarrow \M$
by
\begin{equation}
\label{flow3}
\gamma(0,u)=u\qquad\text{and}\qquad
\frac{d}{dt}\gamma(t,u)=-\frac{2\eta
\alpha(\gamma(t,u))}{\|\ZZ(\gamma(t,u))\|^2}\ZZ(\gamma(t,u));
\end{equation}
see \eqref{flow} and \eqref{flow2}.
Let $g \in G$ such that $\max_{z \in B_1}\Ray_0(g(z))< c+\eta$.
Then we can easily see
$\gamma(t,g(z))=g(z)$ for each $(t,z)\in [0,1]\times \partial B_1$,
which yields ${\mathrm{deg}}(\beta(\gamma(1,g(\cdot))), {\mathrm{Int}}(B_1),0)=1$.
Moreover,
we can find $\Ray_0(\gamma(1,g(z)))\leq c-\eta$ for each $z\in B_1$,
which contradicts the definition of $c$. 
From Proposition~\ref{s-struwe},
we can find that this contradiction proves 
the existence of a nonnegative weak solution to \eqref{problem}. 
By \cite[Theorem~2.5]{IMS},
the obtained solution is positive in $\Omega$.
\qed


\bigskip
\bigskip

\bigskip
\bigskip

\end{document}